\documentclass[reqno,10pt]{amsart}
\usepackage[english]{babel}
\usepackage{amsmath,amsfonts,amsthm,amssymb,amsbsy,upref,color,graphicx,hyperref,enumerate,comment}
\usepackage[a4paper,margin=2.84truecm]{geometry}
\usepackage{bbm}
\usepackage{soul}
\usepackage[hyperpageref]{backref}
\usepackage{xinttools} 
\usepackage{tikz}
\usetikzlibrary{positioning,calc,shapes.geometric,angles,quotes}
\usepackage{pgfplots}
\usepackage{ulem}
\pgfplotsset{compat = newest}

\DeclareMathOperator{\diam}{diam}

\def\ds{\displaystyle}
\def\eps{{\varepsilon}}

\def\O{\Omega}
\def\Om{\Omega}
\def\N{\mathbb{N}}
\def\R{\mathbb{R}}

\def\F{\mathcal{F}}

\def\la{\lambda}
\def\eps{\varepsilon}
\def\vps{\varepsilon}
\def\pa{\partial}
\def\sub{\subseteq}
\def\sq{\subseteq}
\def\sm{\setminus}

\newcommand{\be}{\begin{equation}}
\newcommand{\ee}{\end{equation}}
\newcommand{\bib}[4]{\bibitem{#1}{\sc#2: }{\it#3. }{#4.}}

\def\avint{\mathop{\rlap{\hskip2.5pt---}\int}\nolimits}

\numberwithin{equation}{section}
\theoremstyle{plain}

\newtheorem{theo}{Theorem}[section]
\newtheorem{lemm}[theo]{Lemma}
\newtheorem{coro}[theo]{Corollary}
\newtheorem{prop}[theo]{Proposition}

\theoremstyle{definition}
\newtheorem{rem}[theo]{Remark}

\title[ ]{  Mean-to-max ratio of the torsion function and honeycomb structures}
\author[L. Briani]{Luca Briani}

\author[D. Bucur]{Dorin Bucur}

\date{}

\begin{document}

\begin{abstract}
In this paper we study extremal behaviors of the mean to max ratio of the $p$-torsion function with respect to the geometry of the domain. For $p$ larger than the dimension of the space $N$, we prove that the upper bound is uniformly below $1$, contrary to the case $p \in (1,N]$. For  $p=+\infty$,  in two dimensions, we prove that the upper bound is asymptotically attained by a disc from which is removed a network of points consisting on the vertices of a tiling of the plane with regular hexagons  of vanishing size. 
\end{abstract}

\maketitle

\textbf{Keywords:} Torsion function, $p$-Laplacian, principal eigenvalue, honeycomb.

\textbf{2010 Mathematics Subject Classification:} 49Q10, 49J45, 49R05, 35P15, 35J25, 35P99.

\tableofcontents

\section{Introduction}
Let $1<p<+\infty$, $N$ any positive integer and $\O\subset\R^N$ be any nonempty open set with finite Lebesgue measure, $0<|\O|<+\infty$. 
We denote by $w_{p,\O}$ the unique solution to the following boundary value problem
\be\label{eq.pdetor}
\begin{cases}
-\Delta_p w=1&\hbox{in }\O,\\
w\in W^{1,p}_0(\O),
\end{cases}
\ee
where $\Delta_p$ stands for the usual $p-$Laplace operator, defined by 
$$\Delta_p w=\mathrm{div}(|\nabla w|^{p-2}\nabla w).
$$
Here, equation \eqref{eq.pdetor} has to be intended in the usual weak sense, that is
$$
\int_{\O}|\nabla w(x)|^{p-2}\nabla w(x)\nabla \psi(x)dx=\int_\O \psi(x)dx,\quad \hbox{for every } \psi \in W^{1,p}_0(\O).
$$
When $p=2$, problem \eqref{eq.pdetor} is usually known as the torsion problem for $\O$ and the corresponding function $w_{2,\O}$ as the torsion function of $\O$. We adopt such a denomination for $p\neq 2$ as well.

 The  torsion function is being studied for many years. The first results relating the geometry of the domain $\Om$ to qualitative properties of $w_{2, \Om}$  are due to Saint-Venant. Indeed, more than 170 years ago he studied mechanical properties of beams with constant cross section in a model corresponding to the Laplace operator, $p=2$. More  sophisticated phenomena involving plastic deformation under a power creep-law appeal to the $p$-torsion function   for large $p$ (see \cite{BaDiMA}).  The extremal case, $p=+\infty$, pops up in  economical problems, for instance in the location of production centers (see \cite{MoBo}). The latter situation corresponds to a  purely geometrical situation as the torsion function equals formally the distance to the boundary.  It is also important to notice that, from a technical point of view, the  torsion function completely controls the $\Gamma $ convergence of the energies $\Om \to \int_{\R^N} |\nabla u|^p dx + \infty _{[W^{1,p}_0(\Om)]^c}$ over $L^p(\R^N)$. In particular, for $p=2$, this implies a full control of the spectrum of the Dirichlet Laplacian for variations of  the geometric domain $\Om$. In very recent applications,  the geometry of the torsion function is intensively studied in order to understand localization properties of the high order eigenfunctions (see \cite{ADFJM19}).

This paper is devoted to the study of some extremal behavior of the $p$-torsion function and, in particular, to answer some questions left open in \cite{bribu}. We consider the \textit{mean-to-max} ratio of the $p$-torsion function
\be\label{efficiency}
\Phi_p(\O)=\frac{1}{\|w_{p,\O}\|_{L^{\infty}(\O)}}\avint_\O w_{p,\O}(x)dx,
\ee
where $\|\cdot\|_{L^{s}(\O)}$, stands for the usual norm of $L^s(\O)$, with $1\le s\le +\infty$ and the symbol $\avint_\O$ denotes the mean.
According to the terminology used in the literature, we refer to the quantity defined by \eqref{efficiency} as the \textit{efficiency} of the function $w_{p,\O}$. In the case of the first Dirichlet eigenfunction, the study of the efficiency dates back to Payne and Stackgold \cite{PaSt} where the authors focused on  a problem related to the design of a nuclear reactor with flat neutron profile. More recently, efficiency has been considered, for different functions, in \cite{bebuka}, \cite{BeDe}, \cite{beka}.

Clearly the value of $\Phi_p(\O)$ cannot be larger than $1$, and an important question is to understand whether or not this limit value can be asymptotically attained by a sequence of geometries. In  \cite{HeLuPi}, the case $p=2$ is considered and it is proved that 
\be
\begin{split}
\label{resulthelupiSUPINF}
&\sup\{\Phi_2(\O):\O\sub\R^N, \hbox{ open set with }0<|\O|<+\infty\}=1,
\end{split}
\ee
the value $1$ being asymptotically attained  by a homogenizing sequence \`a la Cioranescu-Murat. The motivation to study this problem in \cite{HeLuPi} was related to an inequality of Payne involving the torsional rigidity and the first Dirichlet eigenvalue in a competitive way. 

Our first purpose is to analyse  the supremum of $\Phi_p(\O)$ for arbitrary $p\neq 2$.
While for $1<p\le N$ the homogenization technique  used in \cite{HeLuPi} still works with no important extra difficulties, the case $p>N$ is quite challenging. Indeed, when $p>N$, the homogenization phenomenon does not occur anymore since points have positive $p$-capacity. A different behavior may be expected, namely that  the supremum  is strictly less than $1$ (see  \cite[Open Problems 2 and 4]{bribu}). Our first result contains a proof of this assertion, for $N<p<+\infty$. The main difficulty is that a maximizing sequence would $\gamma_p$-converge to the empty set, the absence of a nontrivial limit making the proof  quite technical.

 We shall also consider the case $p=+\infty$, which is particular. The  pointwise limit behavior of the torsion function $w_{p,\O}$ when $p\to +\infty$ is well known (see \cite{BaDiMA} and \cite{Ka90}). One has
$$
\lim_{p\to +\infty}w_{p,\O}(x)=d(x,\O^c),
$$
where $d(x,\O^c)$ denotes the usual distance function from the point $x$ to the set $\O^c$, that is
$$
d(x,\O^c)=\inf\{|x-y|\ : y\in \O^c\}.
$$

 When $p\to+\infty$, the efficiency $\Phi_{p}(\O)$ of the $p$-torsion function converges to 
$$\Phi_{\infty}(\O)=\frac{1}{\|d(\cdot,\O^c)\|_{L^{\infty}(\O)}}\avint_{\O} d(x,\O^c).$$
Note that  $\|d(\cdot,\O^c)\|_{L^{\infty}(\O)}= \rho(\O)$, where  $\rho(\O)$ stands for the  inradius of $\O$, that is the radius of the largest ball which can be inscribed in $\O$. 
For convex sets, this quantity has been investigated in \cite{bribu}  and it has been proved  that
$$
\frac{1}{N+1}\le \Phi_{\infty}(\O)\le \frac{1}{2}.
$$
In this case both the constants are sharp: the left-hand side equality is attained by any ball, while the right-hand one by a sequence of thinning rectangles type domains.

In this paper we remove the convexity constraint and look for the upper bound of $\Phi_{\infty}$ among arbitrary open sets. A first result is that the supremum of  $\Phi_{\infty}$ is less than $1$. The proof is somehow more direct than the one for $\Phi_p$ (with $N<p<+\infty$), but still quite technical, and is not a consequence of it. For this reason we give its main lines. 

The two dimensional case is particularly interesting since the supremum of  $\Phi_{\infty}$ is conjectured to be precisely equal to $\frac13 + \frac{\log 3}{4}$, being asymptotically attained by a  sequence of sets with a honeycomb structure of boundary points. Such a sequence can be obtained, for instance,  by removing from a fixed open set $\Omega$  a network of points given by  the vertices of a tiling of the plane with regular hexagons  of vanishing size. This behavior was  conjectured in \cite[Open Problem 4] {bribu}. The second main result of our paper is the proof of this assertion.

Hexagonal tilings are recurrent structures which come naturally in a series of optimal  partition/location problems. Beyond the celebrated papers of Hale  \cite{Ha} and  Morgan-Bolton  \cite{MoBo}, we refer the reader to \cite{BuFr18}, \cite{BuFr19}. There are not many strategies to prove that optimal structures asymptotically behave like a honeycomb geometry. Hales proves the honeycomb conjecture by finding an ad hoc hexagonal isoperimetric inequality. The few other cases, in which a unified strategy could be followed, correspond to a decomposition of the domain in convex polygons in association with a suitable   polygonal isoperimetric inquality. This strategy was followed by Morgan and Bolton in the economical location problem  \cite{MoBo}, but goes back to  Fejes T\'oth \cite{FT} who first proved the honeycomb conjecture for convex cells.

In our analysis, the  unified strategy used in \cite{MoBo,FT, BuFr18} consisting  in partitioning  the set in a union of convex polygons in association with polygonal isoperimetric inequalities, does not apply. This is due to   the non-local nature of the inradius which prevents us to write down, in any useful way, optimality conditions. Our proof exploits instead an ad hoc isoperimetric inequality in relationship with the Delaunay triangulation associated to a fictious cloud of points spread inside the open set $\Omega$.

An intuitive conclusion of our results is not only that  the supremum values of the efficiency of the torsion function \eqref{efficiency} are of different nature depending on $p$ (below or above $N$) but also the asymptotical maximization structures might behave differently, being much more rigid for $p>N$ than for $p\le N$.
 For instance, in the two dimensional case one may expect that for $p>2$ the maximizing structures are {\it only} the hexagonal ones, while for $p \le 2$ some freedom is left with maximizing structures of different geometry. Any kind of periodical structures in the Cioranescu-Murat analysis which are $\gamma$-converging to a constant multiple of the Lebesgue measure lead to the supremum value $1$.

Before presenting the plan of the paper and state the main results, let us comment briefly on the comparison between the maximization of the efficiency $\Phi_{\infty}$ and the economical location problem of Morgan and Bolton, in the context of the torsion problem. 
We start by recalling a (still open) conjecture of Buttazzo, Santambrogio and  Varchon from 2006 (see \cite{BSV06})  on the optimal compliance location problem. Let $\Om\sq \R^2$ be a bounded open set    and $c\ge  0$. For every $n \in \N$ and for $r=cn^{-1/2}$ one solves the problem 
\begin{equation}\label{brbu01}
\min \left \{\int_\Om w_{2,\Om\sm \cup_{i=1}^n \overline B_r(x_i)}(x) dx : x_1, \dots, x_n \in \R^N\right \}.
\end{equation}
 For $c >0$, the optimal distributions of the balls is conjectured to be     asymptotically given by a  tiling of the set $\Om$ with regular hexagons (the centers of the balls being the vertices of the hexagons, see \cite{BSV06}) when $n \to +\infty$. In fact, this problem naturally extends to all $p>1$, including $p=+\infty$. This latter case, in which  $c=0$ is a nontrivial choice, leads precisely to the 
 Morgan-Bolton  economical location problem.    However, the compliance conjecture  is still open for  any $p<+\infty$. 

Let us turn to the more subtile model which  involves not only the $L^1$ norm of the state function, but also its $L^\infty$-norm, in a competing way. The counter part of  problem \eqref{brbu01}  reads 
\begin{equation}\label{brbu02}
\max \left \{   \frac {\ds \int_\Om w_{2,\Om\sm \cup_{i=1}^n \overline B_r(x_i)}(x) dx} {\| w_{2,\Om\sm \cup_{i=1}^n \overline B_r(x_i)}\|_\infty} : x_1, \dots, x_n \in \R^N\right \}.
\end{equation}
Extending the problem above to $p=+\infty$, the choice $c=0$ leads precisely to maximization of $\Phi_{\infty}$.  
A possible interpretation as an economical location problem is that the maximization of  $\Phi_{\infty}$ corresponds to the modeling   of repulsive spots for which the average distance has, contrary to problem \eqref{brbu01}, to be large, while a too large individual distance is not acceptable (e.g. a territorial distribution of commercial malls or hospital centers). 

Looking to \eqref{brbu01} and \eqref{brbu02} in the limit case $p=+\infty$, the fact that both solutions are honeycombs may be intriguing, as one is a minimization and the other is maximization of ``almost" the same quantity. Of course, the difference comes from  the presence in  \eqref{brbu02} of the $L^\infty $-norm which,  alone, it is also expected to be minimal on honeycomb structures, naturally introducing a competition between the $L^1$ and the $L^\infty$ norms. The main consequence of this competition  makes the unified strategy  of \cite{MoBo,FT, BuFr18}  impossible to follow.

\medskip

\noindent {\bf Plan of the paper.}
This paper is organized as follows. In Section \ref{spre} we recall some preliminary results and briefly describe the minimization problem of $\Phi_p$. For the sake of completeness, we also give a short proof of the upper bound   \eqref{resulthelupiSUPINF}  in the case $1<p\le N$, extending the result of  \cite{HeLuPi}.

 Section \ref{spun} contains our first main result. 
 \begin{theo}\label{brbu04}
Let $N<p\le +\infty$. Then 
$$\sup\left\{\Phi_p(\O): \ \O\subset\R^N,\hbox{ open set with }  0<|\O|<+\infty\right\}<1.$$
\end{theo}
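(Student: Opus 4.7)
Plan: argue by contradiction, assuming a sequence $(\Omega_n)$ of admissible open sets with $\Phi_p(\Omega_n)\to 1$. Since $\Phi_p$ is scale-invariant, normalize $M_n := \|w_n\|_{L^\infty(\Omega_n)}=1$ and translate so the maximum is attained at the origin. Testing $-\Delta_p w_n=1$ against $w_n$ gives $\int|\nabla w_n|^p = \int w_n \le |\Omega_n|$, and because $p>N$ Morrey's embedding applied to the zero-extension of $w_n$ yields
\[
w_n(x) = |w_n(x)-w_n(y)| \le C(p,N)\,|x-y|^{1-N/p}\,\|\nabla w_n\|_p \quad \text{for every } y\in\Omega_n^c;
\]
taking $y$ to realize $d(x,\Omega_n^c)$ gives $w_n(x)\le C\,d(x,\Omega_n^c)^{1-N/p}|\Omega_n|^{1/p}$, so for any fixed $\eta\in(0,1)$ the super-level set $\{w_n\ge 1-\eta\}$ sits inside the $t_n$-interior $\{d(\cdot,\Omega_n^c)>t_n\}$ with $t_n:=c_\eta\,|\Omega_n|^{-1/(p-N)}$. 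The Brunn--Minkowski erosion bound $|\{d(\cdot,\Omega_n^c)>t\}|^{1/N}+\omega_N^{1/N}\,t \le |\Omega_n|^{1/N}$ combined with the split
\[
\Phi_p(\Omega_n) \le (1-\eta)+\eta\,\frac{|\{w_n\ge 1-\eta\}|}{|\Omega_n|}
\]
gives $\Phi_p(\Omega_n) \le 1-\delta(\eta,p,N)<1$ whenever $t_n/|\Omega_n|^{1/N}$ is bounded below. Since the Saint--Venant-type bound $1=M_n\le C_{p,N}|\Omega_n|^{p/(N(p-1))}$ forces $|\Omega_n|\ge c_{p,N}>0$, this disposes of the subcase $|\Omega_n|$ uniformly bounded.

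The subtle regime, exactly the one flagged in the introduction, is $|\Omega_n|\to\infty$, where $t_n/|\Omega_n|^{1/N}\to 0$ and the direct estimate degenerates; equivalently, under the dual rescaling to $|\Omega_n|=1$ one has $M_n\to 0$, $w_n\to 0$ uniformly, and the naive $\gamma_p$-limit of $\Omega_n$ is empty. To recover nontrivial information I would blow up around a maximizer $x_n$ at the Morrey scale $t_n$: the rescaled function $\widetilde W_n(y):=w_n(x_n+t_n y)$, defined on $U_n:=t_n^{-1}(\Omega_n-x_n)$, is the $p$-torsion function of $U_n$ with right-hand side $t_n^p\to 0$, satisfies $\widetilde W_n(0)=\|\widetilde W_n\|_\infty=1$, and carries the scale-matched energy
\[
\int_{U_n}|\nabla\widetilde W_n|^p = t_n^p\int_{U_n}\widetilde W_n = t_n^{p-N}\,|\Omega_n|\,\Phi_p(\Omega_n) \to c_\eta^{p-N} \in (0,\infty).
\]
Morrey compactness produces, up to a subsequence, $\widetilde W_n\to \widetilde W_\infty$ locally uniformly on $\R^N$, with $\widetilde W_\infty$ Hölder continuous, $\widetilde W_\infty(0)=\|\widetilde W_\infty\|_\infty=1$, and $p$-harmonic on its positivity set in the limit (the forcing term vanishing). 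The strong maximum principle then forces $\widetilde W_\infty\equiv 1$ on the connected component of the origin, and Hölder continuity combined with the obligatory zero extension across the boundary of the positivity set forces that component to exhaust $\R^N$, which is incompatible with the nonzero limiting energy $\|\nabla\widetilde W_n\|_p\to c_\eta^{(p-N)/p}>0$ unless the $\gamma_p$-limit of the blown-up complements carries a nontrivial capacitary measure $\mu$. Appealing to the Cioranescu--Murat-type identification available for $p>N$ (where points have positive $p$-capacity, so $\mu$ cannot be discarded), the relaxed equation $-\Delta_p\widetilde W_\infty+\mu\,\widetilde W_\infty^{p-1}=0$ contradicts $\widetilde W_\infty\equiv 1$, closing the case.

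The chief obstacle is this last step, i.e., the identification and exploitation of the capacitary measure $\mu$ emerging from the blown-up complements $U_n^c$, and the correct matching of the two-step normalization so that the limiting PDE retains enough information to contradict $\Phi_p\to 1$. The hypothesis $p>N$ enters decisively here through the positive $p$-capacity of points; for $p\le N$ this capacity vanishes and the homogenization construction of \cite{HeLuPi} genuinely attains the value $1$. The endpoint $p=+\infty$ follows the same skeleton with the Lipschitz property of $d(\cdot,\Omega^c)$ replacing Morrey's Hölder estimate and the identity $w_\infty=d(\cdot,\Omega^c)$ replacing the inscribed-ball comparison, which, as noted in the introduction, makes the argument more direct.
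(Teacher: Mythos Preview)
Your proposal is a sketch rather than a proof, and the crucial step---the unbounded-volume case---does not close. The bounded-$|\Omega_n|$ case via Morrey plus Brunn--Minkowski erosion is reasonable, but the blow-up argument for $|\Omega_n|\to\infty$ is circular in the following way. You first assert that $\widetilde W_\infty$ is $p$-harmonic on its positivity set, deduce $\widetilde W_\infty\equiv 1$ by the strong maximum principle, and then say this contradicts the nonzero limiting energy \emph{unless} a nontrivial capacitary measure $\mu$ appears. But the $p$-harmonicity of the limit is exactly what fails when $\mu\neq 0$; you cannot use the conclusion $\widetilde W_\infty\equiv 1$ (which relied on $\mu=0$) to contradict the relaxed equation $-\Delta_p\widetilde W_\infty+\mu\widetilde W_\infty^{p-1}=0$. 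Conversely, if $\mu=0$ there is no contradiction either: the global energy $\int_{U_n}|\nabla\widetilde W_n|^p\to c_\eta^{p-N}>0$ lives on a domain $U_n$ with $|U_n|\to\infty$, and lower semicontinuity only gives $\|\nabla\widetilde W_\infty\|_p\le\liminf\|\nabla\widetilde W_n\|_p$, the wrong direction. The energy can simply escape to infinity while $\widetilde W_n\to 1$ locally uniformly; nothing in your setup rules this out. Moreover, the $\gamma_p$/Cioranescu--Murat identification you invoke is formulated for sequences of perforations inside a \emph{fixed} bounded container with a \emph{fixed} nonzero right-hand side, whereas here the domain volume diverges and the forcing $t_n^p\to 0$; making sense of ``the relaxed equation on $\R^N$'' in this regime is a nontrivial project in itself, not a citation.

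The paper avoids compactness entirely and proves a \emph{uniform} quantitative estimate: for every admissible $\Omega$, at least a fixed fraction $|\Omega|/C(N,p)$ of the volume lies either in $\{w<\tfrac12\,\avint w\}$ or in $\{w>\tfrac32\,\avint w\}$. This is obtained by a Vitali cover of $\Omega$ by maximal inscribed balls, a case split into large/small balls (relative to a threshold radius determined by $\avint w$), and, for the small balls, a further split according to whether $w$ is large nearby; the tools are comparison with explicit radial solutions, a Caccioppoli-type $L^\infty$--$L^p$ estimate for $w$, and Morrey's inequality (this is where $p>N$ enters). Combined with a quantitative Jensen inequality this gives $\Psi_p(\Omega)\le 1-c(N,p)$ uniformly, hence $\Phi_p\le\Psi_p<1$. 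The $p=\infty$ case follows the same covering skeleton with the distance function directly. The point is that this argument never passes to a limit and therefore never has to confront the degeneration you flagged; your blow-up strategy, as written, does not survive that degeneration.
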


In fact, for $N<p<+\infty$, we shall prove a stronger version of this result, showing that
$$\sup \left \{\left(\frac{1}{|\O|}\int_{\O} w_{p,\O}(x)\right)\left(\frac{1}{|\O|}\int_{\O}w^p_{p,\O}(x)\right)^{-1/p}\right \}<1.$$

The proof of our second result is  given in Section \ref{sdist}. 
\begin{theo}\label{brbu05}
We have
$$\sup\left\{\Phi_\infty(\O): \ \O\subset\R^2,\hbox{ open set with }  0<|\O|<+\infty\right\}=\frac 1 3+\frac {\ln(3)} {4}.$$
Denoting $H_1$ the set of the vertices of a tiling of the plane by regular hexagons of area $1$, the supremum above is attained by any sequence   $(\Omega\setminus H_\vps)_{\vps}$, where $\Omega$ is a smooth bounded open set, $H_\vps=\vps H_1$ and $\vps \to 0_+$.
\end{theo}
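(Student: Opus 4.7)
Plan. The theorem combines an explicit construction (lower bound) and a matching universal estimate (upper bound).

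\emph{Construction (lower bound).} Take $\Omega_\vps = \Omega \setminus H_\vps$ and observe that
$$d(\cdot, \Omega_\vps^c) = \min\{d(\cdot, \Omega^c),\, d(\cdot, H_\vps)\}.$$
Since $d(\cdot, H_\vps) \le \vps R_0$, where $R_0$ is the circumradius of a fundamental Voronoi cell of $H_1$, this minimum equals $d(\cdot, H_\vps)$ outside an $O(\vps)$-strip around $\partial\Omega$ (whose contribution to the $L^1$-norm is $O(\vps^2)$), and $\|d(\cdot, \Omega_\vps^c)\|_\infty \to \vps R_0$. Rescaling $y = x/\vps$ and using the periodicity of $H_1$, the bulk average converges to $\vps$ times the average of $d(\cdot, H_1)$ over a single hexagonal fundamental cell of area $1$; the $6$-fold dihedral symmetry of this cell reduces the area integral to a polar integral over one right sub-triangle. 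The key elementary identity is
\[
\int_0^{\pi/6}\sec^3\theta\,d\theta \;=\; \tfrac{1}{3} + \tfrac{\ln 3}{4},
\]
so dividing by the peak value $R_0$ yields $\Phi_\infty(\Omega_\vps) \to \tfrac{1}{3} + \tfrac{\ln 3}{4}$, as needed.

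\emph{Universal upper bound.} For arbitrary admissible $\Omega$, set $\rho = \rho(\Omega)$. Following the Delaunay strategy announced in the introduction, spread in $\Omega$ a fictitious cloud $\{p_i\}$ adapted to $\rho$ (a maximal packing with separation of order $\rho$), and form its Delaunay triangulation $\mathcal{T}$. The core estimate is the ad hoc per-triangle isoperimetric inequality
\[
\int_T d(x, \Omega^c)\,dx \;\le\; \rho\,|T|\,\Bigl(\tfrac{1}{3} + \tfrac{\ln 3}{4}\Bigr), \qquad T \in \mathcal{T},
\]
with equality only for equilateral triangles of circumradius $\rho$; six such triangles around a shared centroid assemble into a regular hexagon, reproducing the Voronoi cell of the extremal sequence. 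Summing over $\mathcal{T}$ and absorbing the boundary region as an $o(|\Omega|)$ remainder gives $\int_\Omega d(\cdot, \Omega^c)\,dx \le \rho\,|\Omega|\,(\tfrac{1}{3} + \tfrac{\ln 3}{4})$, whence $\Phi_\infty(\Omega) \le \tfrac{1}{3} + \tfrac{\ln 3}{4}$.

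\emph{Main obstacle.} The crux is the per-triangle isoperimetric inequality. As the introduction emphasizes, the convex-polygon decomposition of Fejes T\'oth and Morgan--Bolton fails here because the inradius is a non-local quantity that couples the cells; the Delaunay triangulation supplies the correct combinatorial scaffolding, but the sharp local inequality itself requires a careful shape analysis, in which the $1$-Lipschitz function $d(\cdot, \Omega^c)$ must be compared against its extremal cone-shaped envelope on acute and obtuse triangles separately, together with a rearrangement (or calibration) argument reducing to the equilateral case. A secondary delicate point is the calibration of the cloud $\{p_i\}$: the packing must be coarse enough for each Delaunay triangle to carry a circumradius close to $\rho$, yet fine enough that the uncovered boundary strip near $\partial\Omega$ contributes only $o(|\Omega|)$. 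These two calibrations combine to produce exactly the hexagonal extremal announced in the statement.
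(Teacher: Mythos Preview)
Your lower-bound sketch is fine and matches the paper's construction.

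The upper bound, however, has a genuine gap in the choice of the point cloud and in the per-triangle inequality you state. You propose to spread a maximal packing $\{p_i\}$ \emph{inside} $\Omega$ with separation of order $\rho$, and then claim
\[
\int_T d(x,\Omega^c)\,dx \le \rho\,|T|\,\Bigl(\tfrac{1}{3}+\tfrac{\ln 3}{4}\Bigr)
\]
for each Delaunay triangle $T$. But if the vertices of $T$ are interior points of $\Omega$, the function $d(\cdot,\Omega^c)$ does not vanish at them; on a triangle sitting deep inside $\Omega$ one can have $d(x,\Omega^c)\approx\rho$ throughout $T$, so $\int_T d(x,\Omega^c)\,dx\approx\rho|T|$, which violates your inequality. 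There is no mechanism linking $d(\cdot,\Omega^c)$ to the geometry of a triangle whose vertices lie in $\Omega$, and no ``cone-shaped envelope'' comparison is available when the function does not vanish at the vertices. The calibration you discuss (``coarse enough for circumradius close to $\rho$, fine enough for $o(|\Omega|)$ boundary strip'') does not repair this.

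What the paper actually does is different in a crucial way. It first approximates $\Omega$ by a polyomino $\Omega_\eps$ built from $\eps$-squares, and takes as the point cloud the \emph{lattice points on $\partial\Omega_\eps$}, denoted $\partial_{d,\eps}\Omega_\eps$. These points lie in $\Omega_\eps^c$, so one can work with the surrogate functional
\[
\Phi_{d,\infty}(\Omega_\eps)=\frac{\int_{\Omega_\eps}d(x,\partial_{d,\eps}\Omega_\eps)\,dx}{|\Omega_\eps|\,\|d(\cdot,\partial_{d,\eps}\Omega_\eps)\|_\infty},
\]
which differs from $\Phi_\infty(\Omega_\eps)$ by an $O(\eps)$ error. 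On each Delaunay triangle $\Delta$ of this cloud, the empty-circle property gives $d(x,\partial_{d,\eps}\Omega_\eps)=d(x,V(\Delta))$, and the correct isoperimetric quantity is
\[
\mathcal{E}(\Delta)=\frac{\int_\Delta d(x,V(\Delta))\,dx}{|\Delta|\,r(\Delta)}\le \mathcal{E}(\Delta_{eq})=\tfrac{1}{3}+\tfrac{\ln 3}{4},
\]
with $r(\Delta)$ the \emph{circumradius} of $\Delta$, not the global inradius $\rho$. The link to $\rho$ comes from a separate lemma: because the cloud sits on the boundary of a grid-aligned polyomino, the circumcenter of every Delaunay triangle contained in $\Omega_\eps$ lies in $\Omega_\eps$, forcing $r(\Delta)\le\|d(\cdot,\partial_{d,\eps}\Omega_\eps)\|_\infty$. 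There is no uncovered boundary strip to absorb; the triangulation covers $\Omega_\eps$ exactly. Your outline is missing both the ``points must lie in $\Omega^c$'' idea and the circumradius-versus-inradius control; without them the argument does not close.
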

We finish with some remarks and applications in Section \ref{brbu10}, in particular we give a positive answer to Open problem $2$ of \cite{bribu} concerning the shape optimization of a functional involving a competition between the first eigenvalue of the $p$-Laplacian and the $p$-torsion energy.

\section{Preliminaries and the  sub-dimensional case}\label{spre}
Let $1<p<+\infty$  and 
$\mathfrak{F}_p:W^{1,p}_0(\O)\rightarrow \R$
 be the strictly convex functional defined by
\be\label{functional}
\mathfrak{F}_p(u)=\frac{1}{p}\int_\O |\nabla w(x)|^pdx-\int_\O w(x)dx. 
\ee
A simple application of the direct method of calculus of variation proves that there exists a unique minimizer for $\mathfrak{F}_p$. Such a minimizer wealky solves the boundary problem \eqref{eq.pdetor} and coincide then with $w_{p,\O}$. 
Being $\mathfrak{F}_p(v)\ge \mathfrak{F}_p(|v|)$ we have that $w_{p,\O}\ge 0$ in $\O$.
By testing \eqref{eq.pdetor} with $w_{p,\O}$ itself, we deduce that
\be\label{torsionprop}
\int_{\O}|\nabla w_{p,\O}(x)|^pdx=\int_\O w_{p,\O}(x)dx
\ee 
When necessary, we identify $w_{p,\O}$ with its extension in $W^{1,p}(\R^N)$ obtained by  defining $w_{p,\O}$ to be $0$ in $\R^N\setminus\O$. With such an identification it holds  $-\Delta_p w_{p,\O}\le 1$ weakly in $\R^N$, that is
\be\label{extension}
\int_{\R^N}|\nabla w_{p,\O}(x)|^{p-2}\nabla w_{p,\O}(x)\nabla \phi(x)dx\le \int_{\R^N}\phi(x)dx
\ee
for every $\phi\in W^{1,p}(\R^N)$, $\phi\ge 0$.

An explicit computation of $w_{p,\O}$ is, in general, not available, except for some very specific choice of the domain $\O$. For instance if $\O=B(x_0,r)$, where we denote by $B(x_0,r)$ the open open ball of $\R^N$ centered at $x_0$ and with radius $r>0$, is easy to verify that
\be\label{torsionball}
w_{p,B(x_0,r)}(x)=\frac{r^{p'}-|x-x_0|^{p'}}{p'N^{p'/p}},
\ee
$p'=p/(p-1)$ being the conjugate exponent of $p$.
Also, it is worth recalling that, due to the degeneracy of the operator $\Delta_p$, weak solutions to \eqref{eq.pdetor} not always belong to $C^2(\O)$ as already  \eqref{torsionball} shows. In fact, one can prove that $w_{p,\O}\in C^{1,\alpha}(\O)$ with  regularity holding  up to the boundary if the domain $\O$ is regular enough, see for instance the classical results in \cite{Dibe}.
Using the explicit expression \eqref{torsionball}, the fact that $w_{p,\O}\ge 0$ and the comparison principle we can easily deduce that $w_{p,\O}>0$ in $\O$.
 
The following Caccioppoli-type estimate, see  \cite{bebu} Lemma $10$, can be obtained by considering $\theta\in C^{\infty}_c(\R^N)$  and testing \eqref{extension} with $\theta^pw_{p,\O}\in W^{1,p}(\R^N)$:
\be\label{caccioppoli}
\begin{split}
\int_{\R^N} |\nabla(w_{p,\O}(x)\theta(x))|^pdx &\le c_1\int_{\R^N} w_{p,\O}(x)|\theta(x) |^pdx\\
&+c_2\int_{\R^N} w^p_{p,\O}(x)|\nabla \theta(x)|^p dx.
\end{split}
\ee
Here $c_1$ is any constant with $c_1> 2^{p-1}$ and $c_2=c_2(p,c_1)$, where we use the convention of writing $c=c(s_1,\dots,s_n)$ to denote a constant $c>0$ whose value depends only on some quantities $s_1,\dots,s_n\in \R$.

In \cite{bebu} is also noticed (see inequality $(30)$)  that, as a consequence of \eqref{caccioppoli}, one gets the existence of two constants $C_1=C_1(N,p,c_1),C_2=C_2(N,p,c_1)$ such that for every $r>0$ and $x_0\in \R^N$ it holds
\be\label{caccioppoli2}
w_{p,\O}(x_0)\le C_1\left(\avint_{B(x_0,r)}w_{p,\O}^p(x)dx\right)^{1/p}+C_2r^{p'}.
\ee
We recall the well known Morrey inequality, see \cite{EvGa}.
\begin{lemm}[Morrey inequality]\label{morrey}
For each $ N<p<+\infty$ there exists a constant $C=C(N,p)$, such that
$$
|u(y)-u(z)|\le C r^{1-\frac{N}{p}} \left(\int_{B(x_0,r)}|\nabla u(x)|^{p}dx\right)^{1/p}
$$
for all $B(x_0,r)\subset\R^N$, $u\in W^{1,p}(B(x_0,r))$ and for almost every $y,z\in B(x_0,r)$.
\end{lemm}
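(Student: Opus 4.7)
The plan is to follow the classical argument (see \cite{EvGa}): reduce to smooth $u$ by density, establish a mean-value inequality controlling $|u(x)-u_{B(x_0,r)}|$ by a weakly singular integral of $|\nabla u|$, and then apply H\"older's inequality, with the hypothesis $p>N$ entering precisely to render the singular kernel integrable. Since $C^\infty(\overline{B(x_0,r)})\cap W^{1,p}(B(x_0,r))$ is dense in $W^{1,p}(B(x_0,r))$, a standard approximation at Lebesgue points reduces the problem to $u\in C^1(\overline{B(x_0,r)})$.

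For such $u$ and any $x\in\overline{B(x_0,r)}$, writing (with $\omega=(w-x)/|w-x|$)
\begin{equation*}
u(w)-u(x)=\int_0^{|w-x|}\partial_\tau u(x+\tau\omega)\,d\tau,
\end{equation*}
averaging over $w\in B(x_0,r)$, using the inclusion $B(x_0,r)\subset B(x,2r)$, and passing to polar coordinates centered at $x$ via Fubini, one obtains the mean-value inequality
\begin{equation*}
\Big|u(x)-\avint_{B(x_0,r)}u(w)\,dw\Big| \le C_N\int_{B(x_0,r)}\frac{|\nabla u(w)|}{|w-x|^{N-1}}\,dw.
\end{equation*}

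Next, H\"older's inequality with conjugate exponent $p'=p/(p-1)$ yields
\begin{equation*}
\int_{B(x_0,r)}\frac{|\nabla u(w)|}{|w-x|^{N-1}}\,dw \le \|\nabla u\|_{L^p(B(x_0,r))}\Big(\int_{B(x,2r)}|w-x|^{-(N-1)p'}\,dw\Big)^{1/p'}.
\end{equation*}
The hypothesis $p>N$ is equivalent to $(N-1)p'<N$, which is precisely the threshold for integrability of the singular kernel at $0$; an explicit polar-coordinate computation then gives a bound of the form $C(N,p)\,r^{1-N/p}$ on the second factor (the exponent $1-N/p$ arises from $[N-(N-1)p']/p'$). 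Combining the two estimates shows that
\begin{equation*}
|u(x)-u_{B(x_0,r)}| \le C(N,p)\,r^{1-N/p}\,\|\nabla u\|_{L^p(B(x_0,r))}
\end{equation*}
for every $x\in\overline{B(x_0,r)}$, so applying this at $x=y$ and $x=z$ and invoking the triangle inequality yields the Morrey estimate with constant $2C(N,p)$.

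The main (mild) technical step is the justification of Fubini and the polar-coordinate change in the mean-value inequality; apart from this, everything reduces to a transparent algebraic condition on the exponents, making the proof essentially a bookkeeping exercise. Finally, the extension from smooth functions to arbitrary $u\in W^{1,p}(B(x_0,r))$ is obtained by a standard Lebesgue-point argument applied to a smooth approximating sequence $u_k\to u$.
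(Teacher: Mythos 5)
Your proof is correct and follows the classical argument of the cited reference \cite{EvGa}: fundamental theorem of calculus along rays to obtain the mean-value inequality with kernel $|w-x|^{1-N}$, H\"older's inequality, and the observation that $p>N$ is exactly the condition $(N-1)p'<N$ making the kernel integrable, giving the exponent $1-N/p$ on $r$. The paper does not supply its own proof of this lemma but merely cites it as a well-known result, so your argument is precisely the intended one.
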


At last, we notice that the minimization problem for $\Phi_{p}$ is not interesting since it is easy to show that that 
\[
\begin{split}
&\inf\{\Phi_p(\O):\O\sub\R^N, \hbox{ open set with }0<|\O|<+\infty\}=0.
\end{split}
\]
Indeed, let $\O_n$ be the following open set
$$
\O_n= \bigcup_{k=1,\dots, n}B(2k,r_k), \quad \hbox{where }r_k=k^{-1/N},\ \hbox{for }k=1,\dots,n;
$$
clearly we have $\lim_{n\to+\infty}|\O_n|=+\infty$.
Since, by \eqref{torsionball}, it holds
$$w_{p,\O_n}(x)=\sum_{i=1}^{n}w_{p,B(2k,r_k)}(x)=\frac{1}{p'N^{p'/p}}\sum_{k=1}^{n}(r_k^{p'}-|x-2k|^{p'})\chi_{B(2k,r_k)}(x),$$
being $\chi_E$ the characteristic function of the set $E$. In particular $$\|w_{p,\O_n}\|_{L^{\infty}(\O_n)}=\frac{1}{p'N^{p'/p}}, \quad \lim_{n\to+\infty}\int_{\O_n}w_{p,\O_n}(x)dx<+\infty,$$
which in turns imply $\lim_{n\to+\infty}\Phi_p(\O_n)=0$.

\bigskip

Concerning the maximization problem for $\Phi_p$, as already noticed in the Introduction, it is proved in \cite{HeLuPi} that, for $N\ge 2$, it holds
\be\label{resulthelupi}
\sup\{\Phi_2(\O):\O\sub\R^N, \hbox{ open set, with }0<|\O|<+\infty\}=1.
\ee
The proof is based on the following homogenization procedure for which we refer to \cite{ciomur}: denoting $B_1=B(0,1)$, and given $a>0$, there exists a sequence $(\O_n)_{n\in\N}$ of open subsets of $B_1$, such that the sequence $(w_{2,\O_n})_{n\in\N}\subset W^{1,2}(B_1)$ weakly converges to the solution $w_{2,a}$ of the following boundary problem
$$
\begin{cases}
-\Delta w+aw=1, & \hbox{in }B_1;\\
w\in W^{1,2}_0(B_1),
\end{cases}
$$ 
as usual intended in the weak sense.

Once showed that
\be\label{lemmahelupi1}
\lim_{n\to+\infty}\|w_{2,\O_n^a}\|_{L^{\infty}(\O_n^a)}=\|w_{2,a}\|_{L^{\infty}(B_1)},
\ee
\be\label{lemmahelupi2}
aw_{2,a}\rightharpoonup 1\hbox{ in } L^2(B_1)\ \hbox{as }a\to +\infty, \quad  0\le aw_{2,a}(x)\le 1 \hbox{ a.e. in }B_1. 
\ee
the identity \eqref{resulthelupi} is achieved in \cite{HeLuPi}, using \eqref{lemmahelupi1} and \eqref{lemmahelupi2},  through the following inequalities
\be\label{proofhelupi}
\begin{split}
&1\ge \sup\Phi_2(\O)\ge \sup_{a>0}\left(\lim_{n\to+\infty}\Phi_2(\O^a_{n})\right)\ge \sup_{a>0} \frac{\int_{B_1} w_{2,a}(x)dx}{|B_1|\| w_{2,a}\|_{L^{\infty}(B_1)}}\\
&\ge \lim_{a\to+\infty}\frac{\int_{B_1} w_{2,a}(x)dx}{|B_1|\| w_{2,a}\|_{L^{\infty}(B_1)}}=1.
\end{split}
\ee

The whole argument can be basically repeated whenever $1< p\le N$. For completeness, we shortly describe here the technical points needed to prove the analogues of \eqref{lemmahelupi1} and \eqref{lemmahelupi2} in this setting.
First we notice that for every $\mu>0$ there exists a sequence $(\O_{n})_{n\in\N}$ of open subsets of $B_1$ such that the sequence $(w_{p,\O_{n}})_{n\in\N}$ weakly converge in $W^{1,p}(B_1)$ to the solution $w_{p,\mu}$ of the following boundary value problem
\be\label{eq.ciomu}
\begin{cases}
-\Delta_p w+\mu |w|^{p-2}w=1, & \hbox{in }B_1;\\
w\in W^{1,p}_0(B_1).
\end{cases}
\ee
For this result we refer to \cite{LaPi}, see also \cite{bubu05}, \cite{He} and references therein. Notice that here is precisely where the hypothesis $p\le N$ is needed. Then we have the following lemmas.

\begin{lemm}\label{lem.ntoinf}
Let $(\O_n)_{n\in\N}$ be a sequence of open subsets of $B_1$. Then, if $w_{p,\O_n}$ weakly converges to $v\in W^{1,p}_0(B_1)$, we have also
$$
\lim_{n\to+\infty}\|w_{p,\O_n}\|_{L^{\infty}(\O_n)}=\|v\|_{L^{\infty}(B_1)}
$$
\end{lemm}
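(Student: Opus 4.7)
The strategy is to upgrade the weak $W^{1,p}$-convergence $w_{p,\O_n}\rightharpoonup v$ into uniform convergence on $\overline{B_1}$; once this is in hand, convergence of $L^\infty$-norms is immediate. The task thus reduces to establishing equiboundedness and equicontinuity of the family $\{w_{p,\O_n}\}_n$ and invoking Arzel\`a--Ascoli.

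Equiboundedness is straightforward: since $\O_n\sub B_1$, the weak comparison principle applied to \eqref{eq.pdetor} yields $0\le w_{p,\O_n}\le w_{p,B_1}$, so $\|w_{p,\O_n}\|_{L^\infty}\le M:=\|w_{p,B_1}\|_{L^\infty}$ by the explicit formula \eqref{torsionball}. For equicontinuity, I extend each $w_{p,\O_n}$ by zero to $\R^N$; by \eqref{extension} these extensions are nonnegative bounded subsolutions of $-\Delta_p u\le 1$ on all of $\R^N$. Classical interior H\"older regularity for $p$-Laplacian subsolutions with bounded right-hand side (Ladyzhenskaya--Ural'tseva, Serrin, DiBenedetto~\cite{Dibe}), applied on a ball compactly containing $\overline{B_1}$, furnishes exponents $\alpha=\alpha(N,p)\in(0,1)$ and $C=C(N,p,M)$ such that
$$
\|w_{p,\O_n}\|_{C^{0,\alpha}(\overline{B_1})}\le C\qquad\text{for every $n$.}
$$
By Arzel\`a--Ascoli, a subsequence then converges uniformly on $\overline{B_1}$ to some $w\in C(\overline{B_1})$; Rellich--Kondrachov simultaneously gives strong $L^p$-convergence to $v$, so $w=v$ almost everywhere and hence everywhere by continuity. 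Since every subsequence produces the same uniform limit $v$, the whole sequence converges uniformly to $v$, yielding $\|w_{p,\O_n}\|_{L^\infty(\O_n)}\to\|v\|_{L^\infty(B_1)}$.

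The main obstacle is the H\"older estimate \emph{up to the boundary} of $B_1$; a purely interior estimate in $B_1$ itself would leave oscillations near $\partial B_1$ uncontrolled, blocking the Arzel\`a--Ascoli step on $\overline{B_1}$. This is precisely why the $\R^N$-wide subsolution property \eqref{extension} is crucial: it allows one to apply a single interior regularity bound on, say, $B_{3/2}$, which then covers interior and boundary of $B_1$ uniformly in $n$, the null extension providing the boundary behaviour for free. Note also that the Caccioppoli-type estimate \eqref{caccioppoli2} alone is insufficient to close the argument, because its constant $C_1>1$ produces at best $\limsup_n\|w_{p,\O_n}\|_\infty\le C_1\|v\|_\infty$, not the sharp upper bound.
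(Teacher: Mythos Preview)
Your argument has a genuine gap at the equicontinuity step. The assertion that bounded weak subsolutions of $-\Delta_p u\le 1$ enjoy interior H\"older regularity is not a theorem: De~Giorgi--Nash--Moser and DiBenedetto's $C^{1,\alpha}$ theory require the function to be a \emph{solution} (equivalently, to satisfy Caccioppoli inequalities for both $(u-k)^+$ and $(k-u)^+$), whereas the extension-by-zero of $w_{p,\O_n}$ is only a subsolution on $\R^N$. The distributional $p$-Laplacian of the extension carries a negative singular measure on $\partial\O_n$, so no two-sided bound is available. Crucially, the lemma is stated and used in the regime $1<p\le N$, where irregular boundary points exist: take $N\ge 3$, $p=2$, and $\O_n=B_1\setminus S$ with $S$ a Lebesgue spine whose tip is an interior point of $B_1$. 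The extended torsion function is then genuinely discontinuous at the tip (it stays bounded away from zero along sequences in $\O_n$, yet vanishes on $S$), so no continuous representative exists and Arzel\`a--Ascoli cannot even be formulated. Your diagnosis that ``the main obstacle is the H\"older estimate up to the boundary of $B_1$'' misidentifies the difficulty: the problem is not $\partial B_1$ but the uncontrolled boundaries $\partial\O_n$ lying \emph{inside} $B_1$.

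Your closing remark that \eqref{caccioppoli2} ``alone is insufficient'' because of the constant $C_1>1$ also misses the point of the paper's proof. The paper does not apply \eqref{caccioppoli2} to $w_{p,\O_n}$ directly; it first uses the truncation identity $(w_{p,\O_n}-b_1)^+=w_{p,A_n}$ with $A_n=\{w_{p,\O_n}>b_1\}$, which says that every superlevel truncation is again a torsion function. One then applies \eqref{caccioppoli2} to this truncation at a maximum point $x_n$: a persisting gap $\|w_{p,\O_n}\|_\infty>b_2>b_1>\|v\|_\infty$ forces a uniform positive lower bound on $\big(\avint_{B(x_n,r)}((w_{p,\O_n}-b_1)^+)^p\big)^{1/p}$, which contradicts the strong $L^p$-convergence $w_{p,\O_n}\to v\le b_1$. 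The constant $C_1$ is harmless here because the right-hand side tends to zero, not to $\|v\|_\infty$. This truncation device is precisely what allows the paper to avoid any continuity of $w_{p,\O_n}$ across $\partial\O_n$.
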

\begin{proof}
The pointwise convergence of $w_{p,\O_n}$ implies $$\liminf_{n\to+\infty}\|w_{p,\O_n}\|_{L^{\infty}(\O_n)}\ge\|v\|_{L^{\infty}(B_1)}.$$ 
We suppose without loss of generality that
$$
\liminf_{n\to\infty}\|w_{p,\O_n}\|_{L^{\infty}(\O_n)}=\lim_{n\to+\infty}\|w_{p,\O_n}\|_{L^{\infty}(\O_n)},
$$
and we assume by contradiction that there exist $b_1<b_2$ such that
\be\label{contradiction}
\lim_{n\to+\infty}\|w_{p,\O_n}\|_{L^{\infty}(\O_n)}>b_2>b_1>\|v\|_{L^{\infty}(B_1)}.
\ee
We denote by $A_n=\{x\in\O_n: \ w_{p,\O_n}(x)>b_1\}$. Since for $n$ large enough $A_n$ is nonempty, we can select $x_n\in A_n$ to be such that
$$
w_{p,\O_n}(x_n)=\|w_{p,\O_n}\|_{L^{\infty}(\O_n)}>b_2.
$$
Notice that we have
$$
(w_{p,\O_n}-b_1)^+=w_{p,A_n}.
$$
where we use the usual notation $g^+=\max\{g,0\}$.
Hence by \eqref{caccioppoli2} we get, 
$$
b_2-b_1<(w_{p,\O_n}-b_1)^+(x_n)\le C_1\left(\avint_{B(x_n,r)}\left((w_{p,\O_n}-b_1)^+\right)^p(x)dx\right)^{1/p}+C_2r^{\frac{p}{p-1}},
$$
for some universal and fixed constants $C_1$ and $C_2$.
Let $r=r(b_1,b_2,p,N)$ be such that 
$$
C_2r^{p/(p-1)}\le (b_1-b_2)/2
$$
We deduce 
\be\label{cosequence}
0<\frac{b_2-b_1}{2}\le C_1\left(\avint_{B(x_n,r)}\left((w_{p,\O_n}-b_1)^+\right)^p(x)dx\right)^{1/p}.
\ee
However since $w_{p,\O_n}$ converges to $v$ in $L^p(B_1)$ and $b_1>v$, the right hand side of \eqref{cosequence} converges to zero when $n\to+\infty$, which is a contradiction. Hence \eqref{contradiction} cannot hold, and this proves the lemma.
\end{proof}

\begin{lemm}\label{lem.atoinf}
Let $a>0$ and  $w_{p,a^{p-1}}$ be the solution to \eqref{eq.ciomu} when $\mu=a^{p-1}$. Then 
$0\le aw_{p,a^{p-1}}(x)\le 1$ almost everywhere in $B_1$. Moreover
the sequence  $(aw_{p,a^{p-1}})_{a>0}$ converge to $1$ almost everywhere as $a\to+\infty$.
\end{lemm}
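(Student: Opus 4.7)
The plan is to obtain the two-sided bound $0\le u_a\le 1$ by comparison, and then identify the pointwise limit through an energy estimate combined with a test-function argument. Write $u_a=aw_{p,a^{p-1}}$ throughout. Nonnegativity of $w_{p,a^{p-1}}$ follows from the fact that the strictly convex energy associated to \eqref{eq.ciomu} decreases under the substitution $w\mapsto|w|$, so the minimizer is nonnegative; alternatively, one tests the weak formulation with $-w_{p,a^{p-1}}^-$. For the upper bound, note that the constant $v\equiv 1/a$ satisfies $-\Delta_p v+a^{p-1}v^{p-1}=1$ pointwise. Since $w_{p,a^{p-1}}-1/a=-1/a<0$ on $\partial B_1$, the function $(w_{p,a^{p-1}}-1/a)^+$ belongs to $W^{1,p}_0(B_1)$, and using it as a test function in the difference of the two weak equations produces two nonnegative integrals whose sum is zero (by strict monotonicity of $\xi\mapsto|\xi|^{p-2}\xi$ and of $t\mapsto t^{p-1}$ on $[0,+\infty)$). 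This forces $w_{p,a^{p-1}}\le 1/a$, i.e., $u_a\le 1$ a.e.

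To pass to the limit as $a\to+\infty$, first test \eqref{eq.ciomu} with $w_{p,a^{p-1}}$ itself to obtain
\[
\int_{B_1}|\nabla w_{p,a^{p-1}}|^p\,dx+a^{p-1}\int_{B_1}w_{p,a^{p-1}}^p\,dx=\int_{B_1}w_{p,a^{p-1}}\,dx\le\frac{|B_1|}{a},
\]
using the upper bound just established. In particular $\|\nabla w_{p,a^{p-1}}\|_{L^p(B_1)}^p\le|B_1|/a\to 0$. Next, for each $\delta>0$ pick $\phi_\delta\in C^\infty_c(B_1)$ with $0\le\phi_\delta\le 1$ and $\phi_\delta\equiv 1$ on $B(0,1-\delta)$. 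Testing \eqref{eq.ciomu} with $\phi_\delta$ and using the identity $a^{p-1}w_{p,a^{p-1}}^{p-1}=u_a^{p-1}$ yields
\[
\int_{B_1}|\nabla w_{p,a^{p-1}}|^{p-2}\nabla w_{p,a^{p-1}}\cdot\nabla\phi_\delta\,dx+\int_{B_1}u_a^{p-1}\phi_\delta\,dx=\int_{B_1}\phi_\delta\,dx.
\]
H\"older's inequality bounds the first term by $\|\nabla w_{p,a^{p-1}}\|_{L^p}^{p-1}\|\nabla\phi_\delta\|_{L^p}$, which vanishes as $a\to+\infty$ by the gradient estimate. Hence $\int_{B_1}u_a^{p-1}\phi_\delta\,dx\to\int_{B_1}\phi_\delta\,dx$.

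Since $0\le u_a^{p-1}\phi_\delta\le\phi_\delta$ and the integrals converge, the nonnegative functions $\phi_\delta-u_a^{p-1}\phi_\delta$ tend to $0$ in $L^1(B_1)$, so a subsequence converges to $0$ almost everywhere on $\{\phi_\delta>0\}\supset B(0,1-\delta)$. A diagonal extraction over a countable sequence $\delta_k\downarrow 0$ shows that every sequence $a_n\to+\infty$ has a subsequence along which $u_{a_n}\to 1$ a.e. on $B_1$, and uniqueness of the limit forces the full family $(u_a)_{a>0}$ to converge a.e. to $1$. The main delicate point is not the manipulations themselves but the passage from this subsequential a.e. convergence to convergence of the full continuous family, which is possible precisely because the limit $1$ is uniquely pinned down by the integral identity above.
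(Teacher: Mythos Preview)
Your argument is correct and essentially parallel to the paper's up to the point where you pass to the limit: the bounds $0\le u_a\le 1$ and the energy estimate $\|\nabla w_{p,a^{p-1}}\|_{L^p}^p\le |B_1|/a$ are the same, and testing against $\phi_\delta$ to obtain $\int_{B_1}u_a^{p-1}\phi_\delta\to\int_{B_1}\phi_\delta$ is exactly what the paper does (with a general $\phi\in W^{1,p}_0$). This legitimately gives $1-u_a^{p-1}\to 0$ in $L^1_{\mathrm{loc}}(B_1)$.

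The gap is in the very last step. The principle ``every sequence has a subsequence converging to the same limit, hence the full family converges'' is valid for metrizable topologies (convergence in measure, $L^1$, etc.) but \emph{not} for almost-everywhere convergence. The typewriter sequence is the standard counterexample: it converges to $0$ in $L^1([0,1])$, every subsequence has a further subsequence converging a.e.\ to $0$, yet the full sequence converges a.e.\ nowhere. Your diagonal argument therefore yields only convergence in measure (or in $L^1_{\mathrm{loc}}$) of the full family $(u_a)$, not a.e.\ convergence as the lemma claims.

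The paper closes this gap with an additional observation you did not make: the family is \emph{monotone} in $a$. Writing $v_\eps=u_a$ with $\eps=a^{1-p}$, one checks that $-\eps_1\Delta_p v_{\eps_2}+v_{\eps_2}^{p-1}=\frac{\eps_1}{\eps_2}+(1-\frac{\eps_1}{\eps_2})v_{\eps_2}^{p-1}\le 1$ whenever $\eps_1<\eps_2$, so the comparison principle gives $v_{\eps_1}\ge v_{\eps_2}$. Monotonicity then forces pointwise convergence of the full family to $\overline v=\sup_\eps v_\eps$, and your (or the paper's) test-function argument identifies $\overline v=1$. If you insert this monotonicity step, your proof becomes complete and essentially coincides with the paper's.
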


\begin{proof}

We denote for the sake of simplicity  $\eps=a^{1-p}$ and $v_\eps=aw_{p,a^{p-1}}$.
First, we want to show that, for any $\eps>0$, it holds 
\be\label{bounds}
0\le v_\eps\le 1 \hbox{ almost everywhere in } B_1.
\ee
Clearly, $v_\eps$ satisfies in the weak $W^{1,p}_0(B_1)$ sense the following problem
\be\label{eqsupp}
\begin{cases}
-\eps\Delta_p v_\eps+|v_\eps|^{p-2}v_\eps=1 & \hbox{in }B_1,\\ 
v_\eps\in W^{1,p}_0(B_1).
\end{cases}
\ee
In particular, $v_\eps$ is the unique minimizer of the strictly convex functional
$$
\mathfrak{F}_{p,\eps}(v)=\frac{\eps}{p}\int_{B_1} |\nabla v(x)|^pdx+\frac{1}{p}\int_{B_1} |v(x)|^pdx-\int_{B_1}v(x)dx
$$
defined in $W^{1,p}_0(B_1)$.Being $\mathfrak{F}_{p,\eps}(v)\ge \mathfrak{F}_{p,\eps}(|v|)$, it holds $v_\eps\ge 0$ in $B_1$. 
Moreover, since $(1+x)^{\alpha}\ge 1+\alpha x$ for every $x>0$ and $\alpha>1$, we have
$$
\frac 1 p \int_{\{v>1\}}\left(|v|^{p}-1\right)dx-\int_{\{v>1\}}(v-1) dx\ge 0,
$$
that in turn implies
$\mathfrak{F}_{p,\eps}(v)\ge \mathfrak{F}_{p,\eps}(\min\{v,1\})$. In particular $v_\eps\le 1$ in $B_1$. 

We can easily notice that if $0<\eps_1<\eps_2$ then $v_{\eps_1} \ge v_{\eps_2}$. Indeed, we have that
$$-\eps_1\Delta_p v_{\eps_2}+ v_{\eps_2}^{p-1}= \frac{\eps_1}{\eps_2} +\left (1-\frac{\eps_1}{\eps_2} \right )  v_{\eps_2}^{p-1}\le 1$$
so that by the comparison principle we get $v_{\eps_1} \ge v_{\eps_2}$ a.e. in $B_1$. In particular, denoting $\overline{v} (x)= \sup_\eps v_\eps(x)$, we get $v_\vps \to \overline{v}  $, a.e. pointwise in $B_1$. 
Since $v_\eps\le 1$ almost everywhere in $B_1$ it also holds $\overline{v}\le 1$ a.e. in $B_1$.
Furthermore, being $\mathfrak{F}_{p,\eps}(v_\eps)\le\mathfrak{F}_{p,\eps}(0)= 0$, we have
$$
 \int_{B_1}\eps|\nabla v_\eps(x)|^pdx\le p|B_1|.
$$
Thus, for any $\phi\in W^{1,p}_0(B_1)$ we have
$$
\left|\eps  \int_{B_1}|\nabla v_\eps(x)|^{p-2}\nabla v_{\eps}(x)\nabla \phi(x)dx\right|\le \left(\int_{B_1}\eps| \nabla v_\eps(x)|^{p}dx\right)^{1/p'}\left(\int |\nabla \phi(x)|^{p}dx\right)^{1/p}\eps^{1-1/p'}
$$
which implies
\be\label{boundednabla}
\left| \eps  \int_{B_1}|\nabla v_\eps(x)|^{p-2}\nabla v_{\eps}(x)\nabla \phi(x)dx\right|\le \eps^{1-1/p'}(p|B_1|)^{1/p'}\|\nabla \phi\|_{L^{p}(B_1)}.
\ee
From equation \eqref{eqsupp}, we obtain
$$
\eps\int_{B_1}|\nabla v_{\eps}(x)|^{p-2}\nabla v_{\eps}(x)\nabla \phi(x)dx+\int_{B_1}|v_{\eps}(x)|^{p-2}v_{\eps}(x)\phi(x)dx=\int_{B_1}\phi(x)dx,
$$
and hence, taking also into account \eqref{boundednabla},
$$
\lim_{\eps\to 0}\int_{B_1}v_{\eps}^{p-1}(x)\phi(x)dx=\int_{B_1}\phi(x)dx.
$$
for any $\phi\in W^{1,p}_0(B_1)$.
By possibly passing to a sub-sequence, being $v_\eps\le 1$ and $v_\eps\to \overline{v}$ a.e. as $\eps\to 0$, the latter implies that 
$$
\int \overline{v}^{p-1}(x)\phi(x)dx=\int_{B_1}\phi(x)dx.
$$
so that, by the arbitrariness of $\phi$, $\overline{v}= 1$ almost everywhere which proves the thesis.
\end{proof}

Combining Lemma \ref{lem.ntoinf} and Lemma \ref{lem.atoinf} we can repeat, with minor differences, the inequalities \eqref{proofhelupi}.

\begin{theo}\label{torsionplN}
Let $1< p\le N$, then:
$$\sup\{\Phi_p(\O):\O\sub\R^N, \hbox{ open set, with }0<|\O|<+\infty\}=1.$$
\end{theo}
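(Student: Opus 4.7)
The plan is to mimic the $p=2$ argument of \cite{HeLuPi} recalled in \eqref{proofhelupi}, using Lemmas \ref{lem.ntoinf} and \ref{lem.atoinf} as drop-in replacements for their $p=2$ counterparts. Since $\Phi_p(\O) \le 1$ holds trivially (the mean of a nonnegative function cannot exceed its essential supremum), only the reverse inequality needs a proof, and it suffices to exhibit a family of open sets whose efficiency can be pushed arbitrarily close to $1$.

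First I would fix $\mu > 0$ and invoke the $p$-capacitary homogenization result of \cite{LaPi} stated just before Lemma \ref{lem.ntoinf}: there exists a sequence $(\O_n^\mu)_n$ of open subsets of $B_1$ such that $w_{p,\O_n^\mu} \rightharpoonup w_{p,\mu}$ weakly in $W^{1,p}(B_1)$, where $w_{p,\mu}$ is the solution to \eqref{eq.ciomu}. By the Rellich--Kondrachov theorem this convergence is strong in $L^1(B_1)$, so $\int_{B_1} w_{p,\O_n^\mu}(x)\, dx \to \int_{B_1} w_{p,\mu}(x)\, dx$, while Lemma \ref{lem.ntoinf} gives $\|w_{p,\O_n^\mu}\|_{L^\infty(\O_n^\mu)} \to \|w_{p,\mu}\|_{L^\infty(B_1)}$. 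Combining both limits yields
$$\lim_{n \to +\infty} \Phi_p(\O_n^\mu) = \frac{\int_{B_1} w_{p,\mu}(x)\, dx}{|B_1|\, \|w_{p,\mu}\|_{L^\infty(B_1)}},$$
and in particular $\sup\Phi_p(\O)$ is bounded below by the supremum of the right-hand side over $\mu>0$.

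Next I would specialize to $\mu = a^{p-1}$ and send $a \to +\infty$. Writing $v_a := a\, w_{p,a^{p-1}}$, Lemma \ref{lem.atoinf} supplies both the uniform bound $0 \le v_a \le 1$ a.e.\ in $B_1$ and the a.e.\ convergence $v_a \to 1$. Multiplying numerator and denominator of the ratio above by $a$, and using $\|v_a\|_{L^\infty(B_1)} \le 1$, the expression rewrites as
$$\frac{\int_{B_1} w_{p,a^{p-1}}(x)\, dx}{|B_1|\, \|w_{p,a^{p-1}}\|_{L^\infty(B_1)}} = \frac{1}{|B_1|\, \|v_a\|_{L^\infty(B_1)}}\int_{B_1} v_a(x)\, dx \ge \frac{1}{|B_1|}\int_{B_1} v_a(x)\, dx,$$
and dominated convergence yields $\int_{B_1} v_a \to |B_1|$, so this quantity tends to $1$. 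Chaining with the previous step reproduces \eqref{proofhelupi} verbatim in the $p \le N$ setting, forcing $\sup\Phi_p(\O) = 1$.

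I expect no serious obstacle: the genuinely delicate ingredients (the $L^\infty$-convergence under weak $W^{1,p}$-convergence, resting on the Caccioppoli-type estimate \eqref{caccioppoli2}, and the asymptotic flattening of $v_a$ as $a \to +\infty$) are exactly what Lemmas \ref{lem.ntoinf} and \ref{lem.atoinf} deliver. The only point to handle with care is the invocation of the homogenization theorem from \cite{LaPi}, whose availability is precisely what pins down the restriction $p \le N$; once this is cited the remaining argument is bookkeeping.
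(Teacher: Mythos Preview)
Your proposal is correct and follows essentially the same approach as the paper's proof: invoke the homogenization result of \cite{LaPi} to pass to the relaxed problem \eqref{eq.ciomu}, use Lemma \ref{lem.ntoinf} for the $L^\infty$-norm and Rellich--Kondrachov for the $L^1$-mean, then let $a\to+\infty$ via Lemma \ref{lem.atoinf} and dominated convergence. The paper's proof is the same chain of inequalities, only slightly more terse (it uses $|\O_n^a|\le |B_1|$ to get a one-sided bound rather than asserting the exact limit of $\Phi_p(\O_n^\mu)$), and your explicit use of $\|v_a\|_{L^\infty}\le 1$ to handle the denominator matches how the paper closes the argument.
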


\begin{proof}
Let $a>0$, and $(\O^a_{n})_{n\in\N}$  a sequence of open subsets of $B_1$ such that the sequence $w_{p,\O^a_n}$ weakly converges in $W^{1,p}_0(B_1)$ to the solution $w_{p,a^{p-1}}$ of the problem \eqref{eq.ciomu} when $\mu=a^{p-1}$. By applying Lemma \ref{lem.ntoinf} and Lemma \ref{lem.atoinf} we get
\[\begin{split}
&1\ge \sup\Phi_p(\O)\ge \sup_{a>0}\left(\lim_{n\to+\infty}\Phi_p(\O^a_{n})\right)\ge \sup_{a>0} \frac{\int_{B_1} w_{p,a^{p-1}}(x)dx}{|B_1|\| w_{p,a^{p-1}}\|_{L^{\infty}(B_1)}}\\
&\ge \lim_{a\to+\infty}\frac{\int_{B_1} aw_{p,a^{p-1}}(x)dx}{|B_1|\| aw_{p,a^{p-1}}\|_{L^{\infty}(B_1)}}	\ge 1,
\end{split}
\]
which proves the theorem.
\end{proof}

\section{Proof of Theorem \ref{brbu04}: the super-dimensional case}\label{spun}
Before discussing the proof of Theorem \ref{brbu04}, let us briefly note what happens if $N=1$. In this case, it is easy to show that for every $1<p<+\infty$ it holds
\be\label{1D}
\sup\left\{\Phi_p(\O): \O\sub\R, \hbox{ open set with }0<|\O|<+\infty\right\}=\frac{p'}{p'+1}.
\ee

Indeed, every $\O\sub\R$ is a disjoint union of open intervals $(\O_i)_{i\in I}$ of length $2r_i$. Using  \eqref{torsionball} with $N=1$, we have
$$
\int_\O w_{p,\O}(x)dx=\frac{2}{p'+1}\sum_{i\in I}r_i^{p'+1}, \quad  \|w_{p,\O}\|_{L^\infty(\O)}= \frac{\max_{i\in I}r_i^{p'}}{p'}.$$
This implies
$$
\Phi_p(\O)=\frac{p'}{p'+1} \left(\frac{\sum_i r_i^{p'+1}}{\max_i r_i^{p'} \sum_i r_i}\right)\le \frac{p'}{p'+1},
$$
with equality achieved when $r_i$ is any constant a value.
Notice that\eqref{1D} already suggests that the upper bound from Theorem \ref{torsionplN} may fail if we remove the assumption $p\le N$.

\bigskip

From now on we suppose $N\ge 2$ and we assume $N<p<+\infty$.

Given $m,r>0$ and $x_0\in \R^N$, we denote by $V_{m, r,x_0}\in W^{1,p}(B(x_0,r))$ a continuous weak solution to the following boundary value problem
\be\label{V}
\begin{cases}
-\Delta_p V=1, & \hbox{in } B(x_0,r),\\
V(x_0)=0,\\
V(x)=m, & \hbox{in }\pa B(x_0,r).
\end{cases}
\ee
whose existence is guaranteed by the following lemma.
\begin{lemm}\label{capacitylemma}
Let $N<p<+\infty$, $m,r>0$ and $x_0\in \R^N$. There exists a continuous weak solution to problem \eqref{V}. In particular there exists $\rho=\rho(m,r,N,p)<r$  such that, for any $x_{0}\in \R^N$ we have
$$
B(x_0,\rho)\sub\{x\in B(x_0,r)\ :\ V_{m,r,x_0}(x)<1/2\}.
$$
\end{lemm}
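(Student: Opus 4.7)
The plan is to construct $V=V_{m,r,x_0}$ as the unique minimizer of the strictly convex functional
$$\mathfrak{G}(u)=\frac{1}{p}\int_{B(x_0,r)}|\nabla u(x)|^p\,dx-\int_{B(x_0,r)}u(x)\,dx$$
over the admissible class
$$\mathcal{K}=\{u\in W^{1,p}(B(x_0,r)):\ u-m\in W^{1,p}_0(B(x_0,r)),\ u(x_0)=0\}.$$
Since $p>N$, every element of $W^{1,p}$ admits a Hölder-continuous representative, so the pointwise condition $u(x_0)=0$ is meaningful; moreover, the compactness of the embedding $W^{1,p}(B(x_0,r))\hookrightarrow C^{0,1-N/p}(\overline{B(x_0,r)})$ makes $\mathcal{K}$ closed under weak $W^{1,p}$-convergence. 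The class is non-empty (take for instance $\eta(x):=m|x-x_0|/r$) and convex, while strict convexity and coercivity of $\mathfrak{G}$ on $\mathcal{K}$, via Poincaré applied to $u-m\in W^{1,p}_0$ combined with Young's inequality, follow by a standard argument. The direct method of the calculus of variations then produces a unique minimizer $V$, automatically continuous on $\overline{B(x_0,r)}$.

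To identify $V$ with a weak solution of \eqref{V}, I would test the minimality against variations $V+t\psi$ with $\psi\in C^\infty_c(B(x_0,r)\setminus\{x_0\})$, which preserve both the boundary trace and the pointwise constraint $u(x_0)=0$; this yields $-\Delta_p V=1$ in the weak sense on $B(x_0,r)\setminus\{x_0\}$, while the two boundary-type conditions are encoded in the membership $V\in\mathcal{K}$. Independently, the comparison $\mathfrak{G}(V)\le\mathfrak{G}(\eta)\le C(N,p,m,r)$ combined once more with Poincaré on $V-m$ and a Young absorption produces the energy estimate
$$\|\nabla V\|_{L^p(B(x_0,r))}\le C(N,p,m,r),$$
which, by the translation invariance of the problem, is independent of $x_0$.

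To conclude, I would invoke Morrey's inequality (Lemma \ref{morrey}) applied on the balls $B(x_0,\rho)$ for $\rho<r$:
$$|V(x)-V(x_0)|\le C(N,p)\,\rho^{\,1-N/p}\,\|\nabla V\|_{L^p(B(x_0,\rho))}\le C(N,p,m,r)\,\rho^{\,1-N/p}$$
for every $x\in B(x_0,\rho)$. Since $V(x_0)=0$, it suffices to choose $\rho<r$ so small that the right-hand side is strictly below $1/2$ to obtain $B(x_0,\rho)\subset\{V<1/2\}$, with $\rho=\rho(N,p,m,r)$ as required. The main obstacle I anticipate is the handling of the pointwise constraint at $x_0$: since $\{x_0\}$ has positive $p$-capacity when $p>N$, it cannot be discarded as a null set, so the Euler–Lagrange equation only holds on the punctured ball and the pointwise value at $x_0$ must be tracked throughout the variational construction. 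This feature distinguishes the supra-dimensional regime from $p\le N$ and makes the uniform control of the Hölder modulus near $x_0$, and hence the $x_0$-independent choice of $\rho$, the delicate step of the argument.
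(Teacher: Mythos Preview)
Your proposal is correct and follows essentially the same approach as the paper: minimize the functional $\mathfrak{F}_p$ over the closed convex set of $W^{1,p}$ functions with boundary value $m$ and pointwise value $0$ at $x_0$, using the Morrey embedding (available since $p>N$) to make the point constraint meaningful and to secure coercivity. The paper dispatches the existence of $\rho$ as ``straightforward'', while you make it explicit via the uniform energy bound $\|\nabla V\|_{L^p}\le C(N,p,m,r)$ combined with Morrey's inequality on $B(x_0,\rho)$; this is exactly the intended argument.
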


\begin{proof}
The argument is standard. Let
$$X=\{ u\in C^0(\overline{B(x_0,r)}):\ u(x_0)=0,\ u(x)=m \hbox{ for every } x\in \pa B(x_0,r) \}.$$
Since $p>N$, the space $W^{1,p}(B(x_0,r))$ continuously embeds in $C^0(\overline{B(x_0,r)})$. In particular, $X \cap W^{1,p}(B(x_0,r))$ is convex and closed in $W^{1,p}(B(x_0,r))$. By Lemma \ref{morrey} we easily deduce that
$$
\|u\|_{L^p(B(x_0,r))}\le C\|\nabla u\|_{L^p(B(x_0,r))}, \hbox{for all }u\in X,
$$
for some $C=C(p,N,r)$.
Hence, the functional $\mathfrak{F}_p$, defined by \eqref{functional}, is coercive on $X\cap W^{1,p}(B(x_0,r))$ and there exists $\overline{u}\in X\cap W^{1,p}(B(x_0,r)$ such that $\min_X\mathfrak{F}_p=\mathfrak{F}_p(\overline{u})$.  Such a minimizer solves in the weak sense equation \eqref{V}. The rest of the Lemma is straightforward.
\end{proof}

As we will use it repeatedly, we recall the statement of the classical Vitali's covering theorem, see for instance \cite{EvGa}.

\begin{lemm}[Vitali's Covering Lemma]
Let $\F$ be a family of closed balls of $\R^N$. Suppose that it holds
$$
\sup\{\diam(B): B\in\F\}<+\infty.
$$
Then, there exists a  countable family of disjoint balls $\F'\subset\F$ such that
$$
\bigcup_{B\in\F}B\sub\bigcup_{B\in\F'} 5B.
$$
\end{lemm}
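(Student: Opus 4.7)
The plan is to follow the classical greedy construction due to Vitali, organizing the family $\F$ into "generations" of comparable diameter and selecting, generation by generation, a maximal disjoint subfamily subject to being disjoint from everything chosen before. Concretely, set $D=\sup\{\diam(B):B\in\F\}<+\infty$ and partition $\F$ into the subfamilies
\[
\F_n=\bigl\{B\in\F:\tfrac{D}{2^{n}}<\diam(B)\le\tfrac{D}{2^{n-1}}\bigr\},\qquad n\in\N.
\]
By construction these subfamilies are pairwise disjoint and their union is all of $\F$.

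Next I would choose the selected subfamilies inductively. Let $\mathcal{G}_1$ be a maximal subfamily of $\F_1$ consisting of pairwise disjoint balls; its existence is ensured by Zorn's lemma applied to the set of pairwise disjoint subfamilies of $\F_1$ ordered by inclusion. Having defined $\mathcal{G}_1,\dots,\mathcal{G}_{n-1}$, let $\mathcal{G}_n$ be a maximal subfamily of $\F_n$ whose members are pairwise disjoint and disjoint from every ball in $\mathcal{G}_1\cup\cdots\cup\mathcal{G}_{n-1}$, again via Zorn. Finally define
\[
\F'=\bigcup_{n\in\N}\mathcal{G}_n.
\]
By construction, any two balls in $\F'$ either lie in the same $\mathcal{G}_n$ (hence are disjoint by maximality within $\mathcal{G}_n$) or lie in different generations (hence are disjoint by the second requirement in the inductive step), so $\F'$ is a pairwise disjoint family. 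Countability is automatic: each ball of $\F'$ contains a point of $\mathbb{Q}^N$, and distinct disjoint balls capture distinct rational points.

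The core step is to verify the covering property. Fix $B\in\F$; then $B\in\F_n$ for some $n$. If $B\in\mathcal{G}_n$ we are done, so assume not. By maximality of $\mathcal{G}_n$, the ball $B$ cannot be added to $\mathcal{G}_1\cup\cdots\cup\mathcal{G}_n$ while preserving disjointness, hence there exists $B'\in\mathcal{G}_1\cup\cdots\cup\mathcal{G}_n$ with $B\cap B'\ne\emptyset$. Since $B'\in\F_k$ for some $k\le n$, we have $\diam(B')>D/2^{k}\ge D/2^{n}\ge \diam(B)/2$. Denoting the centre and radius of $B'$ by $c'$ and $r'$, and the radius of $B$ by $r$, so that $r\le 2r'$, for any $x\in B$ and any $y\in B\cap B'$ the triangle inequality gives
\[
|x-c'|\le|x-y|+|y-c'|\le 2r+r'\le 4r'+r'=5r',
\]
i.e.\ $x\in 5B'$. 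Thus $B\subseteq 5B'\subseteq\bigcup_{B''\in\F'}5B''$, which proves the covering inclusion.

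I do not anticipate a serious obstacle; the only delicate point is the bookkeeping that guarantees that the chosen $B'$ comes from an earlier or equal generation, which is exactly why the diameter dichotomy $r\le 2r'$ holds and the factor $5$ is the right one. Using open balls instead of closed balls changes nothing in the argument, and the use of Zorn's lemma (equivalently the axiom of choice) at each generation could be replaced, when $\F$ is itself countable, by a straightforward transfinite-free greedy enumeration.
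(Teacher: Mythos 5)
Your proof is correct, and it is the classical dyadic--generation greedy argument for the (infinite) Vitali covering lemma: partition $\F$ by diameter into $\F_n$, use Zorn's lemma in each generation to pick a maximal disjoint subfamily compatible with earlier choices, and then show that any unchosen $B\in\F_n$ meets some chosen $B'$ from generation $\le n$, whence $\diam(B')>\tfrac12\diam(B)$ and the triangle inequality gives $B\subset 5B'$. The paper does not reprove this lemma; it simply cites Evans--Gariepy, and the argument you give is precisely the standard one presented there, so there is nothing to compare beyond noting that your write-up supplies the proof the paper omits. One very minor point worth keeping in mind: the countability claim via rational points uses that the balls have positive radius (so that each has nonempty interior); this is the usual convention and the statement as given implicitly excludes the degenerate zero-radius case.
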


The key step to prove our main result is the following technical lemma.
\begin{lemm}\label{lemmakey}
Let $N<p<+\infty$ and $\O\sub\R^N$ be an open set with finite measure. Then, there exists a constant $C=C(p,N)>0$ depending only on $N$ and $p$, such that one of the following cases occur:
\begin{itemize}
\item[(i)] $\left |\left \{x\in\O :\ w_{p,\O}(x)<\frac{1}{2}\avint_\O w_{p,\O}(x)dx \right \}\right |\ge \frac{|\O|}{C};$
\item[(ii)] $ \left | \left \{ x\in\O :\ w_{p,\O}(x)>\frac{3}{2}\avint_\O w_{p,\O}(x)dx\right \}\right |\ge \frac{|\O|}{C}.$
\end{itemize}
\end{lemm}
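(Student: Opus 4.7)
The plan is to argue by contradiction. I would fix a large constant $C=C(N,p)$, to be chosen later, and assume both
\[|\{w_{p,\O}<m/2\}|<|\O|/C\quad\text{and}\quad|\{w_{p,\O}>3m/2\}|<|\O|/C,\]
where $m:=\avint_\O w_{p,\O}$. Consequently $w_{p,\O}$ takes values in $[m/2,3m/2]$ on a set of measure at least $|\O|(1-2/C)$, and I would chase a contradiction by combining two PDE estimates with a super-critical capacitary barrier.

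The first ingredient exploits Morrey's embedding (Lemma \ref{morrey}). Extending $w_{p,\O}$ by zero outside $\O$, the identity \eqref{torsionprop} gives $\int_{\R^N}|\nabla w_{p,\O}|^p=m|\O|$, and applying Morrey's inequality on the ball joining $x\in\O$ to its nearest boundary point yields the pointwise bound
\[w_{p,\O}(x)\le c\, d(x,\pa\O)^{1-N/p}(m|\O|)^{1/p}.\]
Consequently $\{w_{p,\O}\ge m/2\}\sub \{d(\cdot,\pa\O)\ge \delta\}$ with $\delta:=c(m^{p-1}/|\O|)^{1/(p-N)}$. Under the failure of (i), this inner set has measure at least $|\O|(1-1/C)$; the Brunn--Minkowski inclusion $\{d(\cdot,\pa\O)\ge\delta\}+B(0,\delta)\sub\O$ then forces $\delta\le c\,|\O|^{1/N}/C$, or equivalently $m\le c\, |\O|^{p'/N}C^{-(p-N)/(p-1)}$.

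Next, I would combine the failure of (ii) with the Caccioppoli-type estimate \eqref{caccioppoli2} at a maximum point $x^*$ of $w_{p,\O}$. Splitting the $L^p$-average on $B(x^*,r)$ between the set $\{w_{p,\O}\le 3m/2\}$ and its (small) complement, and using the sub-additivity of $t\mapsto t^{1/p}$, one gets
\[M\le C_1\Big(\tfrac{3m}{2}+M\bigl(\tfrac{|\O|/C}{|B(x^*,r)|}\bigr)^{1/p}\Big)+C_2r^{p'}.\]
Choosing $r$ with $|B(x^*,r)|\asymp (2C_1)^p|\O|/C$ absorbs the $M$-term into $M/2$ and yields the quasi-reverse Saint-Venant bound $M\le 3C_1 m+c(|\O|/C)^{p'/N}$; in particular, for $C$ large enough this forces $M/m$ to be bounded uniformly in terms of $N,p$.

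To close the argument I would use the super-critical capacitary barrier: for every $x_0\in\O^c$, comparison on $B(x_0,r)\setminus\{x_0\}$ (since $w_{p,\O}(x_0)=0=V_{M,r,x_0}(x_0)$ and $w_{p,\O}\le M=V_{M,r,x_0}$ on $\pa B(x_0,r)$) gives $w_{p,\O}\le V_{M,r,x_0}$. A rescaled version of Lemma \ref{capacitylemma}, together with the Caccioppoli comparability $M\lesssim m$, produces a radius $\rho$ of the order of $|\O|^{1/N}$ (with constants depending only on $N,p$) such that $V_{M,r,x_0}<m/2$ on $B(x_0,\rho)$. Consequently $\{w_{p,\O}<m/2\}\supset\{x\in\O:d(x,\pa\O)<\rho\}$, and a Vitali covering or a Brunn--Minkowski argument gives $|\{w_{p,\O}<m/2\}|\ge c\,|\O|$, contradicting the failure of (i) once $C>1/c$. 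The hard part is controlling $\rho$ uniformly in $\O$: since $M$ and $|\O|^{p'/N}$ can scale independently (e.g.\ on thin elongated domains), one must express $\rho$ through the scale-invariant ratio $m/M$ provided by the Caccioppoli step rather than through $m,M,|\O|$ individually.
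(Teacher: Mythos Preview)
Your contradiction strategy has a genuine gap at the step where you claim the Caccioppoli estimate ``forces $M/m$ to be bounded uniformly in terms of $N,p$.'' After normalising $m=\avint_\O w_{p,\O}=1$, your Caccioppoli step yields
\[
M \le 3C_1 + c\,(|\O|/C)^{p'/N},
\]
and you need the second term to be $O(1)$, i.e.\ $|\O|\lesssim C$. But your Morrey/Brunn--Minkowski step produces only an \emph{upper} bound on $m$, equivalently a \emph{lower} bound $|\O|\gtrsim C^{N(p-N)/p}$; it gives no upper bound on $|\O|$. Since you fix $C=C(N,p)$ first and then consider arbitrary $\O$, nothing prevents $|\O|$ from being enormous relative to $C$, so $(|\O|/C)^{p'/N}$ can dominate and $M/m$ is not controlled. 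Without that control your capacitary barrier only gives $\rho\asymp r\,(m/M)^{(p-1)/(p-N)}$, which can be arbitrarily small, and the final inclusion $\{d(\cdot,\pa\O)<\rho\}\sub E_1$ carries no quantitative content. (Note also that your Morrey step already shows $\{d<\delta\}\sub E_1$, which under the failure of (i) merely says $|\{d<\delta\}|<|\O|/C$; the barrier step is pushing in the same direction, not the opposite one.)

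The paper's proof circumvents exactly this obstruction by working \emph{locally} rather than through the global ratio $M/m$. It uses a Vitali cover of $\O$ by balls $B(x_i,d(x_i,\O^c))$ and classifies each ball by its radius and by the \emph{local} maximum $\max_{B(y_i,R_0)}w$ (with $y_i\in\pa\O\cap\pa B_i$ and $R_0$ a fixed scale chosen after normalising $m=1$). When the local maximum is at most a fixed constant $m'$, the barrier $V_{m',R_0,y_i}$ has a fixed profile and yields a definite sublevel region, giving (i). When the local maximum is large, the paper uses either the Caccioppoli bound \eqref{caccioppoli2} at a local peak (intermediate maxima, giving (ii)) or Morrey's inequality to show that such balls absorb too much Dirichlet energy to cover a definite fraction of $|\O|$ (very large maxima), which is the contradiction step. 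The point is that the barrier is applied with a height that is a fixed dimensional constant, not the uncontrolled global $M$; this is what your argument is missing.
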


\begin{proof}
We denote for the sake of brevity $w_{p,\O}=w$ and we use the following notation:
$$
w_{0}=\frac{1}{|\O|}\int_{\O}w(x)dx,\quad E_1=\{x\ :\ w<w_0/2\}, \quad E_2=\{x\ : \ w>3w_0/2\}.
$$
By possibly rescaling, we can suppose  that $w_0=1$ and when necessary we can extend $w$ to be zero in $\R^N\setminus\O$, in this case we denote the new function again as $w$.
Our goal is to prove that there exists a constant $C=C(N,p)$ such that at least one of the following cases occurs:
\be\label{goal}
|E_1|\ge \frac{|\O|}{C}, \quad |E_2|\ge \frac{|\O|}{C }.
\ee

We start by introducing some notation. We consider the family $(B(x,d(x,\O^c)))_{x\in\O}$ of open balls. which clearly covers $\O$.
A simple application of the classical Vitali covering Lemma  allows us to define a sub-family 
\be\label{vitalicovering}
\F=\{B_i=B(x_i,r_i):i\in I\},
\ee
with  $r_i=d(x_i,\O^c)$ such that
$$
B_i\cap B_j=\emptyset, \hbox{ if }i\neq j,\quad \hbox{ and }\O\subset \bigcup_{ i\in I} 5B_i.
$$
Thanks to the choice of the radii, we can associate at any ball $B_i\in \F$ a point $y_i$ such that $y_i\in \pa B_i\cap\pa \O$. Notice that possibly $y_i=y_j$ for some $i\neq j$.
We further define $R_0>0$ to be such that
$$w_{p,B(x_0,R_0)}(x_0)=2,\quad \hbox{ for every }x_0\in \R^N,$$
that is, taking into account \eqref{torsionball},  $R_0=(2p')^{1/p'}N^{1/p}$.
We divide $\F$ into the two following sub-families of \textit{large} and \textit{small} balls:
$$
\F_{L}=\{B_i\ : B_i\in\F, \hbox{ s.t. } r_i\ge R_0\}, \quad \F_{S}=\{B_i\ : B_i\in\F, \hbox{ s.t. } r_i<R_0\}.
$$
Moreover, taking $m>0$ to be a constant whose value we fix later, we further divide the family $\F_{S}$ into two disjoint sub-families:
$$
\F_{S_1}=\{B_i\in \F_S\ :\max_{B(y_i,R_0)}w\le m\};\quad \F_{S_2}=\{B_i\in\F_S\ \max_{B(y_i,R_0)}w> m\}.$$

Finally, using Lemma \ref{capacitylemma} and \eqref{torsionball} we choose $0<r_0=r_0(p,N)<R_0$ to be such that, for every $x_0\in\R^N$,
\be\label{rzero}
B(x_0,r_0)\sub\left\{x\in B(x_0,R_0)\ :\ w_{p,B(x_0,R_0)}(x)>3/2,\ V_{m,R_0,x_0}(x)<1/2\right\},
\ee
where $V_{m,R_0,x_0}$ is defined through \eqref{V}.

Now, for the sake of clarity, we proceed by dividing the proof into $4$ different steps.

\noindent \textbf{Step 1:} 
We prove that if the family of large balls $\F_L$ covers enough measure of $\O$ the lemma is proved. More precisely, assume
\be\label{Igoodcase}
\sum_{B\in \F_L}|B|\ge \frac{1}{2}\frac{|\O|}{5^N}.
\ee
Then, for every $B\in \F_L$, we consider the family 
$$\{ B(z,R_0):\ z\in B,\ d(z,B^c)\ge R_0\}.$$ 
Again Vitali's Lemma applies and, being $B$ a bounded set, allows to select a finite number $n=n(B)$ of disjoint balls $\{B(z_i,R_0)\}_{i=1,\dots,n}$ such that $B\subset\cup_{i=1}^{n}5B(z_i,R_0)$.
In particular 
\be\label{aiuto1}
|B|\le n 5^N\omega_NR_0^N.
\ee
Then, for any $i=1,\dots n$, the comparison principle gives 
$$
w(x)\ge w_{p, B(z_i,R_0)}(x), \quad \hbox{ in }B(z_i,R_0)
$$
so that
$$\{w_{p,B(z_i,R_0)}>3/2\}\sub\{w>3/2\}\cap B(z_i,R_0).$$
As a consequence of the former inclusion and of \eqref{rzero}, we get
$$
\omega_Nr_0^N\le \left|\{w>3/2\}\cap B(z_i,R_0)\right|.
$$
Since the balls $B(z_i,R_0)$ are pairwise disjoint, the former inclusion gives
$$
n\omega_N r^N_{0}\le \left|\{w>3/2\}\cap B\right|.
$$
Combining the latter inequality and \eqref{aiuto1} we obtain
$$
\left(\frac{r_0}{5R_0}\right)^N|B|\le \left|\{w>3/2\}\cap B\right|.
$$
Therefore, using \eqref{Igoodcase}, we get
$$
|E_2|=\left|\{w>3/2\}\right|\ge \sum_{B\in\F_L}\left|\{w>3/2\}\cap B\right|\ge \left(\frac{r_0}{5R_0}\right)^N\frac{1}{2}\frac{|\O|}{5^N}
$$
which implies \eqref{goal}, and consequently  the thesis.

\noindent \textbf{Step 2: }We prove that if the family of small balls $\F_{S_1}$ covers enough measure of $\O$ the lemma is proved as well. More precisely, assume 
\be\label{IIgoodcase}
\sum_{B\in \F_{S_1}}|B|\ge \frac{1}{4}\frac{|\O|}{5^N}.
\ee
Let $B(x_i,r_i)\in \F_{S_1}$. Since, by the definition of $\F_{S_1}$, it holds
$$
\max_{B(y_i,R_0)} w\le m,
$$
we have $w\le V_{m,R_0,y_i} \ \hbox{in }\pa B(y_i,R_0)$. Moreover
$-\Delta_p w\le -\Delta_p V_{m,R_0,y_i}$ weakly in $B(y_i,R_0)$ so that, by the comparison principle, it holds
$w\le V_{m,R_0,y_i} \hbox{ in } B(y_i,R_0)$,
and thus, by \eqref{rzero},
\be\label{aiuto2}
B(y_i,r_0)\cap B(x_i,r_i)\sub \{w\le 1/2\}\cap B(x_i,r_i)\cap B(y_i,R_0).
\ee
By the fact that the radii of any ball $B(x_i,r_i)$ is controlled by $R_0$ we also deduce that
\be\label{aiuto3}
|B(y_i,r_0)\cap B(x_i,r_i)|\ge c|B(x_i,r_i)|
\ee
for some constant $c=c(r_0,R_0,N)>0$. Indeed, for $r_i\le r_0/2$  we simply have the inclusion $B(x_i,r_i)\subset B(y_i,r_0)\cap B(x_i,r_i)$, while, for $r_i>r_0/2$ we have
\[
\begin{split}
&|B(y_i,r_0)\cap B(x_i,r_i)|\ge |B(x_i,r_i)|\frac{|B(x_i,r_i)\cap B(y_i,r_0)|}{|B(x_i,r_i)|}\\
&\ge |B(x_i,r_i)|\frac{|B(z,r_0/2)\cap B(y_i,r_0)|}{\omega_NR_0^N}=c|B(x_i,r_i)|,
\end{split}
\]
where $z$ belongs to the segment joining $y_i$ and $x_i$ and is such that $|z-y_i|=r_0/2$.
Now, combining \eqref{aiuto3} and \eqref{aiuto2}, we obtain
$$
c\omega_Nr_i^N\le |\{w\le 1/2\}\cap B(x_i,r_i)\cap B(y_i,R_0)|.
$$
Finally, taking the sum over $B(x_i,r_i)\in \F_{S_1}$ and applying \eqref{IIgoodcase} we obtain
$$
\frac c 4\frac{|\O|}{5^N}\le c\sum_{B\in\F_{S_1}}|B|\le 	\left|\{w\le 1/2\}\right|.
$$
This, again, implies \eqref{goal}, so that the lemma is proved also if \eqref{IIgoodcase} holds.

\noindent \textbf{Step 3:}
To discuss the remaining case, that is when the balls of $\F_{S_2}$ cover a great portion of the measure of $\O$, we need again to distinguish two different type of balls. More precisely, we introduce another constant $M>m$ whose precise value we explicit later (notice that the value of $m$ is not be fixed yet) and we denote by $\F_{S_2}'$ the family of the balls $\F_{S_2}$ for which 
\be\label{defultimepalle}
m<\max_{B(y_i,R_0)}w<M
\ee
holds true and we first discuss the case when
\be\label{IIIgoodcase}
\sum_{B\in \F'_{S_2}}|B_i|\ge \frac{1}{8}\frac{|\O|}{5^N}.
\ee
For any ball $B(x_i,r_i)\in \F'_{S_2}$ let $y_i^\star\in B(x_i,r_i)$ be such that $w(y_i^\star)\ge m$. Since, $r_i\le R_0$ we have $B(x_i,r_i)\subset B(y_i^\star,2R_0)$, and hence, again by the Vitali's covering Lemma, there exists a sequence $(y_i^\star)_{i\in I^\star}$ such that the disjoint balls 
$\{B(y_i^\star,2R_0)\}_{i\in I^\star}$ satisfy
\be\label{aiuto5}
\bigcup_{B\in \F'_{S_2}} B\sub \bigcup_{i\in I^\star} 5B(y_i^\star,2R_0).
\ee
By the inequality \eqref{caccioppoli2}, we have that, for any $r>0$ and $i\in I^\star$, it holds:
\be\label{aiuto4}
m\le w(y_i^{\star})\le C_1\left(\avint_{B(y_i^\star,r)}w^p(x)dx\right)^{1/p}+C_2r^{\frac{p}{p-1}}.
\ee
Here the constants $C_1$ and $C_2$ are completely determined and they depend only by $N$ and $p$. We choose $r_1>0$ to be such that
$$
C_2r_1^{\frac{p}{p-1}}\le \frac{m}{2},\quad r_1<r_0.
$$
Then, \eqref{defultimepalle} and \eqref{aiuto4} give
$$
\omega_N r_1^N\left(\frac{m}{2C_1}\right)^p\le \int_{B(y_i^\star,r_1)}w^p(x)dx\le M^p \left|\left\{w\ge \frac{3}{2}\right\}\cap B(y_i^\star,r_1)\right|+\left(\frac{3}{2}\right)^p \omega_N r_1^N.
$$
Now we fix the value of the constant $m$ in such a way that
$$
\left(\frac{m}{2C_1}\right)^p>2\left(\frac{3}{2}\right)^p .
$$
With this choice we have
\be\label{estimate1}
\frac{\left(\frac{3}{2}\right)^p \omega_Nr_1^N}{M^p} \le \left|\left\{w\ge \frac{3}{2}\right\}\cap B(y_i^\star,r_1)\right|.
\ee

In particular, by \eqref{aiuto5}, \eqref{IIIgoodcase} and \eqref{estimate1}, we have
\[
\begin{split}
&\frac 1 8 \frac{|\O|}{5^N} \le \sum_{B\in \F'_{S_{2}}} |B|\le 5^N\sum_{i\in I^\star} |B(y_i^\star,2R_0)|\\
&\le \frac{5^N M^p}{\left(\frac{3}{2}\right)^p \omega_Nr_1^N}\sum_{i\in I^\star} \left|\left\{w\ge \frac{3}{2}\right\}\cap B(y_i^\star,r_1)\right||B(y_i^\star,2R_0)|\\
&\le \frac{10^N M^p R_0^N}{\left(\frac{3}{2}\right)^p r_1^N}\sum_{i\in I^\star} \left|\left\{w\ge \frac{3}{2}\right\}\cap B(y_i^\star,r_1)\right|\le \frac{10^N M^p R_0^N}{\left(\frac{3}{2}\right)^p r_1^N}\left|\left\{w\ge \frac{3}{2}\right\}\right|,
\end{split}
\]
which implies \eqref{goal}. Thus the lemma is proved also if \eqref{IIIgoodcase} hold.

\noindent\textbf{Step 4:} We conclude the proof by discussing the only remaining case and showing that this leads to a contradiction for a suitable choice of the constant $M$. More precisely, suppose that neither \eqref{Igoodcase} nor \eqref{IIgoodcase} nor \eqref{IIIgoodcase} hold. Then, due to the construction we made of $\F$, it must certainly be 
\be\label{IVgoodcase}
\sum_{B\in \F_{S_2}\setminus\F'_{S_2}}|B_i|\ge \frac{1}{8}\frac{|\O|}{5^N}.
\ee
For every $B_i\in\F_{S_2}\setminus\F_{S_2}'$, we have
$$
M\le \max_{B(y_i,R_0)}w.
$$
Then, Lemma \ref{morrey} gives
$$
\frac{M^p}{R_0^{p-N}C^p}\le \int_{B(y_i,R_0)}|\nabla w(x)|^pdx
$$
for some constant $C=C(N,p)$.
As in the previous step we select a sequence $(y_i)_{i\in I^\star}$, such that the disjoint balls 
$(B(y_i,2R_0))_{i\in I^\star}$ satisfy
$$
\bigcup_{B\in \F\setminus\F'_{S_2}} B\sub \bigcup_{i\in I^\star} 5B(y_i,2R_0),
$$
to obtain
\[
\begin{split}
&\sum_{B\in \F\setminus\F'_{S_2}} |B|\le \sum_{i\in I^\star}|5B(y_i,2R_0)|\\
&\le\omega_N (5R_0)^N\sum_{i\in I^\star}\frac{\int_{B(y_i,2R_0)}|\nabla w(x)|^pdx}{\frac{M^p}{R_0^{p-N}C^p}}\\
&\le \frac{\omega_N (5R_0)^N}{\frac{M^p}{R_0^{p-N}C^p}}\int_\O |\nabla w(x)|^pdx.
\end{split}
\]
Thanks to \eqref{IVgoodcase}, the latter leads to
\be\label{estimate2}
\frac{1}{8}\frac{|\O|}{5^N}\le \frac{\omega_N (5R_0)^N}{\frac{M^p}{R_0^{p-N}C^p}}\int_\O |\nabla w(x)|^pdx.
\ee
Now we recall that $w$ satisfies \eqref{torsionprop} and  the assumption $w_0=1$, thus
$$
\int_\O |\nabla w(x)|^pdx=\int_{\O}w(x)dx=|\O|,
$$
which combined with \eqref{estimate2} finally allows to achieve
$$
\frac{1}{8}\frac{|\O|}{5^N}\le\frac{\omega_N (5R_0)^N}{\frac{M^p}{R_0^{p-N}C^p}}|\O|.
$$
The latter is a contradiction as soon as
$$
M^p> \max\{\omega_N8(25)^NR_0^pC^p, m\},
$$
and this concludes the proof.
\end{proof}

We now consider following ratio for $N<p<+\infty$:
$$
\Psi_p(\O)=\left(\frac{1}{|\O|}\int_{\O} w_{p,\O}(x)\right)\left(\frac{1}{|\O|}\int_{\O}w^p_{p,\O}(x)\right)^{-1/p}.
$$
Clearly, one has 
\be\label{trivial}
\Psi_p(\O)\ge \Phi_p(\O) \hbox{ for every }\O\sub\R^N \hbox{ open set with }0<|\O|<+\infty.
\ee
We also recall the following ``quantitative'' version of Jensen inequality.
\begin{lemm}\label{quantitativeJensen}
Let $p\ge 2$  and $E\subset\R^N$ any measurable set with $0<|E|<\infty$. Then 
$$
\avint_E (f(x))^pdx \ge\left(\avint_E f(x)dx\right)^p+\frac{1}{2^{p-1}-1}\avint_{E}\left|f(x)-\avint_E f(y)dy\right|^pdx
$$
for any Borel positive function on $E$.
\end{lemm}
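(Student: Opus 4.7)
The plan is to reduce the inequality to a pointwise estimate and then integrate, exploiting that $\avint_E (f - \bar f)\,dx = 0$ to kill the linear term. By the positive homogeneity in $f$ (both sides scale as $\lambda^p$ under $f \mapsto \lambda f$), I may normalize $\bar f := \avint_E f\,dx = 1$; since $f \ge 0$, the substitution $t = f(x) - 1$ then gives $t \ge -1$ almost everywhere. The claim will follow from the (stronger) pointwise bound
\begin{equation*}
(1+t)^p \ge 1 + pt + |t|^p \qquad \text{for every } t \ge -1 \text{ and } p \ge 2.
\end{equation*}
Indeed, integrating this with $t = f(x) - 1$ and using $\avint_E(f-1)\,dx = 0$ together with $1 \ge 1/(2^{p-1}-1)$ for $p \ge 2$ gives $\avint_E f^p \ge 1 + \avint_E |f-1|^p \ge 1 + \tfrac{1}{2^{p-1}-1}\avint_E |f-1|^p$, which is the claim in the normalized case.

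To prove the pointwise inequality, I split into $t \ge 0$ and $t \in [-1, 0]$. For $t \ge 0$, put $h(t) = (1+t)^p - t^p - pt - 1$; then $h(0) = 0$ and
\begin{equation*}
h'(t) = p\bigl((1+t)^{p-1} - 1 - t^{p-1}\bigr) \ge 0,
\end{equation*}
the last inequality being the standard superadditivity $(a+b)^q \ge a^q + b^q$ for $a, b \ge 0$ and $q \ge 1$, applied with $a = 1$, $b = t$, $q = p-1$. Hence $h \ge 0$ on $[0, \infty)$.

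For $t \in [-1, 0]$, set $s = -t \in [0,1]$ and $k(s) := (1-s)^p + ps - 1 - s^p$. Then $k(0) = 0$ and
\begin{equation*}
k'(s) = p\bigl(1 - (1-s)^{p-1} - s^{p-1}\bigr).
\end{equation*}
The function $\varphi(s) := (1-s)^{p-1} + s^{p-1}$ is a sum of two convex functions on $[0,1]$ (since $p-1 \ge 1$), hence convex, and satisfies $\varphi(0) = \varphi(1) = 1$; convexity then yields $\varphi(s) \le s\,\varphi(1) + (1-s)\,\varphi(0) = 1$ on $[0,1]$. Thus $k' \ge 0$ and $k \ge 0$, which completes the argument. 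I expect the main obstacle to lie in this second case $t \in [-1, 0]$: a direct Taylor expansion around $t = 0$ is delicate because for non-integer $p$ the second-order remainder is not easily dominated by $|t|^p$, but the monotonicity argument above, resting on the convexity of $\varphi$ together with its endpoint values, provides a clean bypass.
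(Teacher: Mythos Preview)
Your proof is correct and follows the same skeleton as the paper's: establish a pointwise inequality of the form $a^p \ge b^p + pb^{p-1}(a-b) + c\,|a-b|^p$, apply it with $a=f(x)$ and $b=\avint_E f$, and integrate so that the linear term vanishes. The only difference is that the paper simply quotes Lindqvist's inequality \cite{Li}, valid for all real $a,b$ with $c=1/(2^{p-1}-1)$, whereas you prove from scratch the sharper constant $c=1$ in the restricted range $t=a/b-1\ge -1$ that is actually relevant here (since $f\ge 0$), and then throw away the extra strength via $1\ge 1/(2^{p-1}-1)$. So your argument is self-contained and in fact yields the stronger conclusion $\avint_E f^p \ge (\avint_E f)^p + \avint_E |f-\avint_E f|^p$; the paper's route is shorter only because it outsources the pointwise bound to the literature.
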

\begin{proof}
For any $a,b\in \R$ we have, see for instance \cite{Li} Lemma $4.2$,
\be\label{Li}
|a|^p\ge |b|^p+p|b|^{p-2}b(a-b)+\frac{|a-b|^p}{2^{p-1}-1}.
\ee
By applying \eqref{Li} with $$a=f(x),\quad b=\avint_E f(x)dx,$$ and by integrating with respect to $x\in E$ we easily achieve the thesis. 
\end{proof}

We are now in a position to prove the following result.

\begin{theo}\label{maintorsion}
Let $N<p<+\infty$. Then we have 
$$\sup\left\{\Psi_p(\O): \ \O\subset\R^N,\hbox{ open set with }  0<|\O|<+\infty\right\}<1.$$
\end{theo}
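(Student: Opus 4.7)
The plan is to deduce Theorem \ref{maintorsion} directly from the two ingredients already in hand: the dichotomy of Lemma \ref{lemmakey} and the quantitative Jensen inequality of Lemma \ref{quantitativeJensen}. The idea is that Lemma \ref{lemmakey} forces $w_{p,\Om}$ to deviate from its mean value on a set of measure proportional to $|\O|$, which, when plugged into the remainder term of the quantitative Jensen bound, produces an improvement of Jensen that is uniform in $\O$.

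First, I would fix $\O\subset \R^N$ with $0<|\O|<+\infty$ and set, for brevity, $w=w_{p,\O}$ and $\overline w=\avint_\O w(x)\,dx$. By Lemma \ref{lemmakey}, there exists a constant $C=C(N,p)$ and a measurable set $E\subset \O$ with $|E|\ge |\O|/C$ such that either $w(x)<\overline w/2$ for every $x\in E$, or $w(x)>3\overline w/2$ for every $x\in E$. In either case $|w(x)-\overline w|\ge \overline w/2$ on $E$, whence
\[
\avint_\O |w(x)-\overline w|^p\,dx \;\ge\; \frac{1}{|\O|}\int_E \left(\frac{\overline w}{2}\right)^p dx \;\ge\; \frac{\overline w^{\,p}}{2^p\,C}.
\]

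Next, I would apply Lemma \ref{quantitativeJensen} to $f=w$ on $E'=\O$, which (since $p>N\ge 2$) gives
\[
\avint_\O w^p(x)\,dx \;\ge\; \overline w^{\,p} + \frac{1}{2^{p-1}-1}\,\avint_\O |w(x)-\overline w|^p\,dx \;\ge\; \overline w^{\,p}\left(1+\frac{1}{2^p\,C\,(2^{p-1}-1)}\right).
\]
Dividing, one obtains
\[
\Psi_p(\O)^p \;=\; \frac{\overline w^{\,p}}{\avint_\O w^p(x)\,dx} \;\le\; \frac{1}{1+\dfrac{1}{2^p\,C\,(2^{p-1}-1)}}\;=:\;\kappa(N,p)<1,
\]
which, since $\kappa$ depends only on $N$ and $p$, yields $\sup_\O \Psi_p(\O)\le \kappa(N,p)^{1/p}<1$. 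The claimed strengthening of Theorem \ref{brbu04} then follows from the trivial bound \eqref{trivial}.

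There is no genuine obstacle left in this last step: all the technical difficulty has been absorbed into Lemma \ref{lemmakey}, whose proof handles the fact that a maximizing sequence would $\gamma_p$-converge to the empty set via the Vitali/Morrey/Caccioppoli machinery. The only care needed here is to verify that $p>N\ge 2$ ensures $p\ge 2$, so that Lemma \ref{quantitativeJensen} applies, and to track that the constant $\kappa(N,p)$ stays strictly less than $1$; both are immediate.
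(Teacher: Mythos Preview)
Your proof is correct and follows essentially the same route as the paper: invoke Lemma \ref{lemmakey} to get a set of proportional measure where $w$ deviates from its mean by at least $\overline w/2$, then feed this into the remainder term of Lemma \ref{quantitativeJensen} (using $p>N\ge 2$) to obtain the uniform improvement over Jensen. The only cosmetic difference is that you handle both alternatives of the dichotomy at once via the observation $|w-\overline w|\ge \overline w/2$, whereas the paper treats one case and declares the other similar.
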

\begin{proof}
With the same notation used in the proof of Lemma \ref{lemmakey}, i.e.
$$
w_{0}=\frac{1}{|\O|}\int_{\O}w(x)dx,\quad E_1=\{x\ :\ w(x)<w_0/2\}, \quad E_2=\{x\ : \ w(x)>3w_0/2\}.
$$
and by  possibly rescaling to get $w_0=1$ we can assume, by Lemma \ref{lemmakey} that
$$
|E_1|\ge \frac{|\O|}{C},
$$
for some constant $C>0$ that does not depend on $\O$ (the other case being similar). Then, by Lemma \ref{quantitativeJensen}, we get
\[
\begin{split}
&\avint_\O w^p(x)dx \ge w_0^p+\frac{1}{2^{p-1}-1}\avint_{\O}\left|w(x)-w_0\right|^pdx\\
& \ge w_0^p+\frac{1}{|\O|(2^{p-1}-1)}\int_{E_1}\left|w(x)-w_0\right|^pdx\ge w_0^p\left(1+\frac{|E_1|}{|\O|2^p(2^{p-1}-1)}\right)\\
& \ge w_0^p\left(1+\frac{1}{C2^p(2^{p-1}-1)}\right).
\end{split}
\]
By the very definition of the ratio $\Psi_p(\O)$, this suffices to prove the thesis. 
\end{proof}

As an immediate consequence of Theorem \ref{maintorsion} we have the following result concerning the efficiency of the torsion function when $p>N$.
\begin{coro}\label{coro.maintorsion}
Let $ N<p<+\infty$, then we have
$$\sup\left\{\Phi_p(\O): \ \O\subset\R^N,\hbox{ open set with }  0<|\O|<+\infty\right\}<1.$$
\end{coro}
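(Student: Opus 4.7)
The plan is to simply chain the pointwise inequality $\Phi_p(\Omega) \le \Psi_p(\Omega)$ with the uniform bound already proved in Theorem \ref{maintorsion}. Concretely, I would begin by recalling the trivial bound \eqref{trivial}, namely $\Phi_p(\Omega) \le \Psi_p(\Omega)$ for every admissible $\Omega$. This in turn comes from the elementary estimate
$$
\left(\frac{1}{|\Omega|}\int_{\Omega} w_{p,\Omega}^p(x)\, dx\right)^{1/p} \le \|w_{p,\Omega}\|_{L^\infty(\Omega)},
$$
which I would state in a single sentence since it is just the fact that the $L^p$-average of a nonnegative function is dominated by its essential supremum. Comparing the definitions of $\Phi_p$ and $\Psi_p$, their numerators agree and the denominator of $\Psi_p$ is no larger than that of $\Phi_p$, so the inequality follows.

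Taking the supremum over all admissible open sets $\Omega \subset \mathbb{R}^N$ with $0 < |\Omega| < +\infty$ then gives
$$
\sup \Phi_p(\Omega) \le \sup \Psi_p(\Omega),
$$
and the right-hand side is strictly less than $1$ by Theorem \ref{maintorsion}. This proves the corollary.

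There is no real obstacle here; the work was entirely absorbed into Theorem \ref{maintorsion} (and hence into the key Lemma \ref{lemmakey} together with the quantitative Jensen inequality of Lemma \ref{quantitativeJensen}). The only thing to emphasize in the write-up is that passing from the $L^p$-mean denominator in $\Psi_p$ to the $L^\infty$ denominator in $\Phi_p$ can only decrease the ratio, so the strict bound transfers for free.
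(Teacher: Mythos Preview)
Your proof is correct and follows exactly the same approach as the paper: apply the pointwise inequality \eqref{trivial} and then invoke Theorem \ref{maintorsion}. The paper's own proof is in fact just a single sentence to this effect, so your write-up is, if anything, more detailed than necessary.
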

\begin{proof}
Is enough to apply Theorem \ref{maintorsion} together with \eqref{trivial}.
\end{proof}

We conclude this section by proving the second part of Theorem \ref{brbu04},  corresponding to the particular case $p=+\infty$. The proof basically follows the same lines of that of Theorem \ref{maintorsion} (and Lemma \ref{lemmakey}), but with less technicalities, hence we only sketch it.

\begin{prop}\label{maindistance}
We have $$\sup\left\{\Phi_\infty(\O): \ \O\subset\R^N,\hbox{ open set with }  0<|\O|<+\infty\right\}<1.$$
\end{prop}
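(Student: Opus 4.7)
The plan is to adapt the strategy of Lemma \ref{lemmakey}, replacing the Caccioppoli-type machinery by the $1$-Lipschitz property of the distance function $d(x)=d(x,\O^c)$. Let $\rho=\rho(\O)=\|d\|_{L^\infty(\O)}$. I begin by applying Vitali's covering lemma to the family $\{B(x,d(x))\}_{x\in\O}$ to extract a pairwise disjoint subcollection $\F=\{B_i=B(x_i,r_i)\}_{i\in I}$ with $r_i=d(x_i)$ and $\O\sub\bigcup_i 5B_i$; in particular $\sum_i|B_i|\ge|\O|/5^N$. To each $B_i$ I associate a boundary point $y_i\in\pa B_i\cap\pa\O$ with $|x_i-y_i|=r_i$.

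Next, I fix a threshold $\delta\in(0,1/2)$ (to be chosen) and split $\F$ into small balls $\F_S=\{B_i:r_i<\delta\rho\}$ and large balls $\F_L=\{B_i:r_i\ge\delta\rho\}$. At least one of $\F_S$, $\F_L$ carries a total mass $\ge|\O|/(2\cdot 5^N)$. If the small balls dominate, then for every $y\in B_i\in\F_S$ the $1$-Lipschitz estimate gives $d(y)\le|y-y_i|\le 2r_i<2\delta\rho$; by disjointness of the $B_i$, this yields $|\{d<2\delta\rho\}|\ge|\O|/(2\cdot 5^N)$.

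If instead the large balls dominate, I use that for each $B_i\in\F_L$ the ball $B(y_i,\delta\rho)$ is centred on $\pa B_i$ with radius $\delta\rho\le r_i$, and a direct geometric argument (fitting a ball of radius $\delta\rho/2$ inside $B_i\cap B(y_i,\delta\rho)$) shows $|B_i\cap B(y_i,\delta\rho)|\ge\omega_N(\delta\rho/2)^N\ge(\delta/2)^N|B_i|$, where the last step uses $|B_i|\le\omega_N\rho^N$. On this lens $d\le|y-y_i|\le\delta\rho$, and disjointness of the $B_i$ gives $|\{d\le\delta\rho\}|\ge(\delta/2)^N|\O|/(2\cdot 5^N)$.

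In either alternative there is a constant $c=c(N,\delta)>0$ with $|\{d<2\delta\rho\}|\ge c|\O|$. Splitting the integral I obtain
$$
\int_\O d(x)\,dx\le 2\delta\rho\cdot c|\O|+\rho(1-c)|\O|=\rho|\O|\bigl(1-c(1-2\delta)\bigr),
$$
so $\Phi_\infty(\O)\le 1-c(1-2\delta)<1$ uniformly in $\O$. The only genuinely new ingredient is the elementary lens estimate for $B_i\cap B(y_i,\delta\rho)$ in the large-ball case; the remaining steps are combinatorial once the Vitali covering is in place. I do not foresee a substantive obstacle, consistent with the author's remark that the argument runs along simpler lines than Lemma \ref{lemmakey}.
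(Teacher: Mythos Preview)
Your proof is correct and follows the same overall blueprint as the paper's (Vitali covering by balls $B(x_i,d(x_i))$, boundary touching points $y_i$, a large/small split, and a lens estimate), but with one simplifying twist: you take the inradius $\rho=\|d\|_\infty$ as your reference scale, whereas the paper works with the mean value $d_0=\avint_\O d$. In the paper's version the threshold is a large multiple $Kd_0$, and the large-ball alternative is first \emph{excluded} by a contradiction (too many large balls would force $\int_\O d>d_0|\O|$); only the small-ball family survives and is then handled in two further subcases ($r_i<d_0/4$ versus $d_0/4\le r_i\le Kd_0$). Your choice of scale makes both branches of the dichotomy directly productive: small balls sit entirely in $\{d<2\delta\rho\}$, and large balls contribute through the lens $B_i\cap B(y_i,\delta\rho)$. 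This buys you a cleaner argument with one fewer layer of case analysis and no contradiction step; the paper's route, on the other hand, produces a sublevel-set estimate relative to the mean $d_0$, which is closer in spirit to the $E_1/E_2$ structure of Lemma~\ref{lemmakey}.
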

\begin{proof}
We use the following notation:
$$
d_0=\frac{1}{|\O|}\int_{\O}d(x,\O^c)dx,\quad E=\left\{x\in \O\ :\ 0\le d(x,\O^c)\le \frac{d_0}{2}\right\}.
$$
Our goal is to prove that there exists a constant $C=C(N)$ it holds:
\be\label{goaldist}
|E|\ge \frac{|\O|}{C}.
\ee

Let $\F$ be the same family of of balls defined in \eqref{vitalicovering}. Notice that, for any $B_i\in\F$, we have
\be\label{estimate}
\int_{B(x_i,r_i)}d(x,\O^c)dx\ge \int_{B(x_i,r_i)}(r_i-|x-x_i|)dx=\frac{\omega_Nr_i^{N+1}}{N+1}.
\ee

Let denote by $\F_{L}$ the family of large balls defined through 
$$\F_L=\{B(x_j,r_j)\ : r_j>Kd_0\},$$ 
where $K>0$ is a large constant.
Then, we have:
\be\label{case1}
\sum_{B_j\in \F_L}|B(x_j,r_j)|\le \frac{|\O|}{5^N2},
\ee
since, if this is not the case, \eqref{estimate} would implies
\[
\begin{split}
&\sum_{B_j\in \F_L}\left(\int_{B(x_j,r_j)}d(x,\O^c)dx\right)\ge \sum_{j\in J}\frac{\omega_Nr_j^{N+1}}{N+1} \ge \frac{Kd_0}{N+1}\sum_{j\in J}\omega_N r_j^{N}\\
&=\frac{Kd_0}{(N+1)}\sum_{j\in J}|B(x_j,r_j)|,
\end{split}
\]
and hence
$$
d_0|\O|> \frac{Kd_0}{5^N2(N+1)}|\O|,
$$
which is a contradiction if we choose $K>5^N2(N+1)$.
By \eqref{case1}, we get that
\be\label{conseguenza}
\sum_{B_i\in \F\setminus \F_L} |B(x_i,r_i)|\ge \frac{|\O|}{5^N2}.
\ee
Now, for any small ball $B_i=B(x_i,r_i)\in \F\setminus \F_L$ there are two possibilities: either $r_i<\frac{d_0}{4}$ or $\frac{d_0}{4}<r_i<Kd_0.$
Clearly, when $r_i\le \frac{d_0}{4}$ it holds $B_i\subset E$,
while, if $r_i\ge d_0/4$, then there exists a region of volume at least $\omega_N(\frac{d_0}{4})^N$ of the ball $B_i$ contained in $E$.
Therefore
\[
\begin{split}
&|E|\ge \sum_{B_i\in \F\setminus \F_L, r_i\le d_0/4}\omega_N r_i^N+\sum_{B_i\in \F\setminus \F_L, r_i\ge d_0/4}\omega_N \left(\frac{d_0}{4}\right)^N\\
&\ge \sum_{B_i\in \F\setminus \F_L, r_i\le d_0/4}\omega_N r_i^N+\sum_{B_i\in \F\setminus \F_L, r_i\ge d_0/4}\omega_N\left(\frac{r_i}{4K}\right)^N
\end{split}.
\]
The latter, combined with \eqref{conseguenza} easily implies \eqref{goaldist}, and allows to conclude.
\end{proof}
\medskip
\noindent{\bf Proof of Theorem \ref{brbu04}.} This is a consequence of Corollary \ref{coro.maintorsion} and Proposition \ref{maindistance}. \qed


\section{Proof of Theorem \ref{brbu05}: the honeycomb structure}\label{sdist}

We devote this  Section to prove Theorem \ref{brbu05} and determine the sharp value of $\sup \Phi_{\infty}$ in the $2$ dimensional case. 
The first tool we need in order to present the proof of Theorem \ref{brbu05} is a sort of isoperimetric-type property for  some geometric energy on triangles.  Given a triangle $\Delta\subset\R^2$ (we identify $\Delta$ with its interior), we denote by $r(\Delta)$ the radius of the  circle circumscribed to $\Delta$, by $V(\Delta)$ the set of its vertices and by ${\mathcal E}(\Delta)$ the following quantity:
$$
{\mathcal E}(\Delta)=\frac{\int_{\Delta}d(x,V(\Delta))dx}{|\Delta|r(\Delta)}.
$$
Notice that ${\mathcal E}(\cdot)$ is scaling free, in the sense that ${\mathcal E}(t\Delta)={\mathcal E}(\Delta)$, for any $t>0$ and any triangle $\Delta\subset\R^2$. Our first goal, which is of technical nature, is to show that among all the triangles the equilateral ones maximize ${\mathcal E}(\Delta)$.
We need the following lemma.

\begin{lemm} \label{reductionacute}
Let $\Delta\subset\R^2$ be any triangle. Then, there exists a triangle $\Delta'\subset\R^2$, which is either  acute\footnote{We do not consider a right triangle acute.} or right and isosceles, for which ${\mathcal E}(\Delta)\le {\mathcal E}(\Delta')$. 
\end{lemm}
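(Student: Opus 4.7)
The plan is to combine a symmetrization reducing the problem to the isosceles case with an explicit one-variable analysis inside the isosceles family.

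If $\Delta$ is already acute, take $\Delta' = \Delta$ and there is nothing to prove. Hence assume $\Delta$ has an angle $\ge \pi/2$ located at a vertex $C$, with opposite side $[A,B]$. By the scale invariance of $\mathcal{E}$, I normalize so that $A=(-1,0)$, $B=(1,0)$ and $C=(x_0,y_0)$ with $y_0 > 0$ and $x_0^2+y_0^2\le 1$ (the boundary case corresponding to a right angle at $C$).

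First I reduce to the isosceles case. Let $\widetilde\Delta$ be the triangle with vertices $A$, $B$ and $\widetilde C=(0,\widetilde h)$, where $\widetilde C$ is the apex of the arc of the circumscribed circle of $\Delta$ lying on the same side of $[AB]$ as $C$; equivalently, $\widetilde\Delta$ and $\Delta$ share the same circumradius $r$. Since the apex maximises the height above $[AB]$ among points on this arc, one has $|\widetilde\Delta| \ge |\Delta|$. The key claim is
\[
\frac{1}{|\widetilde\Delta|}\int_{\widetilde\Delta}d(x,V(\widetilde\Delta))\,dx \;\ge\; \frac{1}{|\Delta|}\int_{\Delta}d(x,V(\Delta))\,dx,
\]
which, combined with the equality of the circumradii, yields $\mathcal{E}(\widetilde\Delta)\ge\mathcal{E}(\Delta)$. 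I would establish this inequality by a Steiner-type symmetrization across the perpendicular bisector of $[AB]$: the vertex set $V(\widetilde\Delta)$ is precisely the symmetric image of $V(\Delta)$ at the apex position, so the cell-by-cell comparison of the distance-to-vertex function between the Voronoi decompositions of the two triangles can be reduced to a one-dimensional inequality along the symmetry axis.

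Next I analyse the isosceles family explicitly. After the first step I may assume $A=(-1,0)$, $B=(1,0)$, $C=(0,h)$ with $h\in(0,1]$, the apex angle at $C$ being $\ge\pi/2$ iff $h\le 1$. Here $|\Delta|=h$ and $r(h)=(1+h^2)/(2h)$. The Voronoi cells of the three vertices inside $\Delta$ can be described explicitly: the cells of $A$ and $B$ are small triangles cut off by the perpendicular bisector of the opposite side (meeting the median at height $h/2$), while the cell of $C$ is a ``kite'' centred on the axis $\{x=0\}$. Integrating the distance to the corresponding vertex in polar coordinates at $A$, $B$ and $C$, and adding the three contributions, yields after simplification the closed form
\[
\mathcal{E}(h) \;=\; \frac{h}{3} \;+\; \frac{\sqrt{1+h^2}}{6}\,\ln\!\bigl(\sqrt{1+h^2}+h\bigr)\;-\;\frac{2h^3\ln h}{3(1+h^2)}.
\]
A direct computation of $\mathcal{E}'(h)$ shows that $\mathcal{E}'(h)>0$ on $(0,1]$, so $\mathcal{E}(h)\le\mathcal{E}(1)$ on that interval. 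Setting $\Delta'$ to be the right isosceles triangle corresponding to $h=1$ completes the reduction.

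The main obstacle is the Steiner-type symmetrization of the first step. When $\Delta$ is obtuse at $C$, the circumcenter lies outside $\Delta$ and the Voronoi cells of $A$ and $B$ inside $\Delta$ are truncated by the perpendicular bisector of $[AB]$ in a manner asymmetric with respect to $\{x=0\}$, while the corresponding cells inside $\widetilde\Delta$ are symmetric; a case analysis (according to how the perpendicular bisectors of $AC$, $BC$ and $\widetilde A\widetilde C$, $\widetilde B\widetilde C$ meet the sides) is needed to carry out the cell-by-cell comparison. The second step, by contrast, is only a calculus verification once the closed-form expression for $\mathcal{E}(h)$ is in hand.
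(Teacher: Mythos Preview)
Your first reduction step is a programme, not a proof, and it is the crux of the argument. The claim that sliding $C$ along the circumscribed arc to the apex increases the \emph{mean} distance to the vertices is not a Steiner symmetrization in any standard sense: both the domain $\Delta$ and the point set $V(\Delta)$ move simultaneously, and there is no evident rearrangement of $d(\cdot,V(\Delta))$ that tracks this. You yourself flag the obstacle and defer it to an unspecified ``case analysis'' of how the perpendicular bisectors of $AC$, $BC$ meet the sides; until that analysis is actually carried out and shown to close, the step is a genuine gap. I do not see a short route: when $\Delta$ is obtuse at $C$ the circumcenter lies outside $\Delta$, the Voronoi cells of $A$ and $B$ inside $\Delta$ never meet, and their shapes depend on the position of $C$ along the arc in a way that does not reduce to a one–dimensional comparison along the axis.

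The paper sidesteps this entirely by a reflection trick. With the obtuse angle at $A$, reflect $A$ across the opposite side $\overline{BC}$ to $A'$, obtaining a kite which one then cuts along the \emph{other} diagonal $\overline{AA'}$ into two isosceles triangles $\Delta_{ABA'}$ and $\Delta_{ACA'}$. For $x\in\Delta_{ABC}$ one has $|x-A|\le|x-A'|$, so on the kite the pointwise bound $d(x,\{A,B,C\})\le d(x,V(\Delta_{ABA'}))$ or $d(x,V(\Delta_{ACA'}))$ (depending on which half contains $x$) holds; summing gives
\[
2\int_{\Delta_{ABC}} d(x,V(\Delta_{ABC}))\,dx \;\le\; \int_{\Delta_{ABA'}} d(x,V(\Delta_{ABA'}))\,dx + \int_{\Delta_{ACA'}} d(x,V(\Delta_{ACA'}))\,dx.
\]
A law-of-sines computation shows $r_{ABA'},\,r_{ACA'}\le r_{ABC}$, and the mediant inequality then yields $\mathcal{E}(\Delta_{ABC})\le\max\{\mathcal{E}(\Delta_{ABA'}),\mathcal{E}(\Delta_{ACA'})\}$. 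If the winning isosceles triangle is still obtuse at its apex, one reflects once more; the base angles then become the apex angles, forcing acuteness (or the right isosceles endpoint). No explicit formula for $\mathcal{E}$ on the obtuse isosceles family, and no monotonicity computation, is needed.
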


\begin{proof}
Let $\Delta_{ABC}$ be a triangle with vertices $V(\Delta_{ABC})=\{A,B,C\}$, we use the notation 
$$
r_{ABC}=r(\Delta_{ABC}),\quad  {\mathcal E}_{ABC}={\mathcal E}(\Delta_{ABC}).
$$
Also we denote respectively by $\alpha,\beta$ and $\gamma$ the angles at the vertices $A,B$ and $C$. To prove the Lemma, we suppose that $\alpha\ge  \pi/2$. We reflect $\Delta_{ABC}$ with respect to the side $\overline{BC}$ and we denote by $A'$ the reflection of the vertex $A$. By the very construction we have
$$
2\int_{\Delta_{ABC}}d(x, \{A,B,C\})dx\le \int_{\Delta_{ABA'}}d(x, \{A,B,A'\})dx+\int_{\Delta_{ACA'}}d(x, \{A,C,A'\})dx.
$$
In particular
$${\mathcal E}_{ABC}\le \frac{1}{r_{ABC}}\left(\frac{\int_{\Delta_{ABA'}}d(x, \{A,B,A'\})dx+\int_{\Delta_{ACA'}}d(x, \{A,C,A'\})dx}{|\Delta_{ABA'}|+ |\Delta_{ACA'}|}\right)
$$
that is
\be\label{refl}
{\mathcal E}_{ABC}\le \left(\frac{\max\{r_{ABA'},r_{ACA'}\}}{r_{ABC}}\right)\max\{{\mathcal E}_{ABA'},{\mathcal E}_{ACA'}\}.
\ee
Notice that
$$
\frac{r_{ABA'}}{r_{ABC}}=\frac{|AB|\sin(\beta)}{2\sin(\beta)\cos(\beta)r_{ABC}}=\frac{|AB|}{cos(\beta)2r_{ABC}}=\frac{\sin(\gamma)}{\cos(\beta)}=\frac{\sin(\gamma)}{\sin(\alpha-\pi/2+\gamma)}\le 1,$$
the last inequality justified by the fact that $\gamma\le \alpha-\pi/2+\gamma<\pi/2$.
Therefore
$$
\frac{r_{ABA'}}{r_{ABC}}=\frac{\sin(\gamma)}{\sin(\alpha-\pi/2+\gamma)}\le 1
$$
and analogously
$$
\frac{r_{ACA'}}{r_{ABC}}= \frac{\sin(\beta)}{\sin(\alpha-\pi/2+\beta)}\le 1.
$$
Combing these inequalities with \eqref{refl} we get
$$
{\mathcal E}_{ABC}\le \max\{{\mathcal E}_{ABA'},{\mathcal E}_{ACA'}\}.
$$
Suppose without loss of generality that $\Delta_{ACA'}$ reaches the maximum in the inequality above.
Being $\Delta_{ACA'}$ isosceles, if $2\gamma< \pi/2$, then is also acute and the lemma is proved.
Suppose instead that $2\gamma> \pi/2$, then we can repeat the same argument as above: we reflect the triangles $\Delta_{ACA'}$ with respect to the side $\overline{AA'}$, we denote by $C'$ the reflection of the vertex $C$. Again we obtain
$$
{\mathcal E}_{ACA'}\le \max\{{\mathcal E}_{CC'A'},{\mathcal E}_{C'CA}\}.
$$
Both the triangles $\Delta_{CC'A'}$ and $\Delta_{C'CA}$ are again isosceles, and in this case they both have two angles equal to $\gamma$. Hence being $2\gamma>\pi/2$ they are  acute triangles. The only remaining case is when $2\gamma=\pi/2$ which precisely corresponds to the case when $\Delta_{ACA'}$ is a right isosceles triangle .
\end{proof}

We can prove now the isopertimetric-type inequality.

\begin{prop}\label{lemma2}
For every triangle $\Delta\subset\R^2$ we have 
\be\label{isoper}
{\mathcal E}(\Delta)\le {\mathcal E}(\Delta_{eq}),
\ee
where $\Delta_{eq}\subset\R^2$ is any equilateral triangle.
\end{prop}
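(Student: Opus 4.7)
The plan is to reduce via Lemma \ref{reductionacute} to acute triangles (or the right isosceles triangle), derive an explicit formula for $\mathcal{E}(\Delta)$ in terms of the three angles, and then show that this formula is maximized at the equilateral configuration.

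For an acute triangle $\Delta$ with vertices $A, B, C$, angles $\alpha, \beta, \gamma$ opposite to them, and circumradius $r = r(\Delta)$, the circumcenter $O$ lies in the interior. I would partition $\Delta$ by the Voronoi diagram of $\{A, B, C\}$: the three cells meet at $O$, are bounded by pieces of the perpendicular bisectors of the sides, and on each cell the function $d(\cdot, V(\Delta))$ coincides with distance to one vertex. Splitting each Voronoi cell by the segment from $O$ to the associated vertex produces six right triangles, one per side of $\Delta$. For the right triangle with vertices $A$, $M_{AB}$ (midpoint of $AB$), $O$, the hypotenuse $\overline{AO}$ has length $r$, the angle $\angle OAB$ equals $\pi/2 - \gamma$ (since $\triangle OAB$ is isosceles with apex angle $2\gamma$ at $O$), and the leg $|AM_{AB}| = r\sin\gamma$. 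A direct computation in polar coordinates centered at $A$ then gives
\[
\int_{\Delta_{AM_{AB}O}} |x-A|\, dx \;=\; r^3\, f(\gamma), \qquad f(\theta) := \frac{\sin(2\theta)}{12} - \frac{\sin^3\theta}{6}\ln\tan\frac{\theta}{2}.
\]
Summing the six such contributions and using $|\Delta| = 2\, r^2 \sin\alpha\sin\beta\sin\gamma$ produces the key formula
\[
\mathcal{E}(\Delta) \;=\; \frac{f(\alpha) + f(\beta) + f(\gamma)}{\sin\alpha\sin\beta\sin\gamma}.
\]
At $\alpha = \beta = \gamma = \pi/3$ the right-hand side evaluates to $\tfrac{1}{3}+\tfrac{\ln 3}{4}$, matching the sharp constant announced in Theorem \ref{brbu05}.

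It remains to maximize the right-hand side on the simplex $\{\alpha+\beta+\gamma = \pi,\ \alpha,\beta,\gamma \in (0,\pi/2]\}$. My approach is pairwise symmetrization: for fixed $\gamma$ set $s = \pi - \gamma$ and study
\[
\psi(\alpha) \;:=\; \frac{f(\alpha) + f(s-\alpha) + f(\gamma)}{\sin\alpha\sin(s-\alpha)\sin\gamma}
\]
on the admissible interval for $\alpha$. The point $\alpha = s/2$ is always a critical point by symmetry, and the proof reduces to showing that $\psi$ is strictly decreasing in $|\alpha - s/2|$. Iterating this monotonicity in each of the three pairs of variables drives any interior maximizer to the symmetric point $(\pi/3, \pi/3, \pi/3)$. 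Boundary behavior is handled separately: as one angle tends to $0$ the triangle degenerates and a short asymptotic analysis gives $\mathcal{E} \to 1/6$, while the right isosceles triangle (the only non-acute configuration surviving Lemma \ref{reductionacute}) is checked directly to yield a value strictly below $\tfrac{1}{3}+\tfrac{\ln 3}{4}$.

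The main obstacle is the monotonicity of $\psi$. Because the formula for $\mathcal{E}$ couples a sum of $f$-values in the numerator with a product of sines in the denominator, no off-the-shelf Schur-convexity or Jensen-type inequality applies directly, so one is forced into an explicit sign analysis of $\psi'(\alpha)$. This derivative mixes trigonometric functions with $\ln\tan$ expressions, and verifying that it has constant sign on each half of the admissible interval requires a careful trigonometric-plus-logarithmic computation — this is the most technically delicate step of the proof.
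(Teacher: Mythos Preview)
Your derivation of the angular formula
\[
\mathcal{E}(\Delta)=\frac{f(\alpha)+f(\beta)+f(\gamma)}{\sin\alpha\sin\beta\sin\gamma},\qquad f(\theta)=\frac{\sin 2\theta}{12}-\frac{\sin^3\theta}{6}\ln\tan\frac{\theta}{2},
\]
is correct (including the value $\tfrac13+\tfrac{\ln 3}{4}$ at the equilateral point and the degenerate limit $1/6$), and the reduction via Lemma~\ref{reductionacute} is the same as in the paper. The difference lies entirely in the optimization step. The paper parametrizes by side lengths, passes to the linear expression $\mathcal L(\ell_1,\ell_2,\ell_3)$ in~\eqref{daprovare}, and uses Lagrange multipliers: the first--order condition reads $L(x_1)=L(x_2)=L(x_3)$ for a single explicit function $L$ on $(0,\pi/2)$, and a short monotonicity analysis of $L$ (Lemma~\ref{isoscele}) forces two of the $x_i$ to coincide. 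The problem then collapses to a one--parameter family of isosceles triangles, handled by another one--variable study (Lemma~\ref{optimiso}).

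Your pairwise--symmetrization route asks for strictly more: that for \emph{every} fixed $\gamma$ the section $\psi(\alpha)$ is unimodal with its peak at $\alpha=\beta$. This is a stronger statement than ``global critical points are isosceles'', because the numerator and the denominator of $\mathcal{E}$ do not separate, so the restricted critical equation $\psi'(\alpha)=0$ is genuinely two--term and depends on $\gamma$; it does not reduce to a level--set condition for a single function of one variable the way the paper's Lagrange system does. You acknowledge this as the ``most technically delicate step'' but do not carry it out, and I do not see a soft argument that guarantees it---so as written the proof has a real gap at exactly the point where the paper's approach gains traction. If you want to salvage the symmetrization idea, the cleanest fix is to imitate the paper: differentiate $\psi$, observe that $\psi'(\alpha)=0$ with $\alpha\neq\beta$ forces an equation of the form $G(\alpha)=G(\beta)$ for an explicit $G$ depending on $\gamma$, and then analyse the shape of $G$; but at that point you are essentially redoing Lemma~\ref{isoscele} in a less convenient gauge. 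Also, ``iterating in each of the three pairs'' is unnecessary once the slice monotonicity is known: a global maximizer must already satisfy $\alpha=\beta$ and $\beta=\gamma$ simultaneously.
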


\begin{proof}
By scaling and translation invariance and thanks to Lemma \ref{reductionacute}, it is enough to prove \eqref{isoper} among triangles inscribed in $B_1$ and for which the origin is contained in the closure of $\Delta$. For such a triangle we can easily  determine the value of ${\mathcal E}(\Delta)$. More precisely, by denoting with $\ell_{1},\ell_{2},\ell_{3}$ the lengths of the three sides of $\Delta$, we have
$$
\int_{\Delta}d(x,V(\Delta))dx=\frac{1}{12}\sum_{i=1}^{3}\ell^3_{i}\int_{0}^{\cos^{-1}(\ell_{i}/2)}\frac{d\theta}{\cos^3(\theta)},
$$
and since
$$
\int_0^{\tau}\frac{d\theta}{\cos^3(\theta)}=\int_{0}^{\sin(\tau)}\frac{dt}{(1-t^2)^2}=\frac{1}{4}\left(\frac{2\sin(\tau)}{\cos^2(\tau)}+\ln\left(\frac{1+\sin(\tau)}{1-\sin(\tau)}\right)\right),
$$
we have also
$$
\int_{0}^{\cos^{-1}(\ell/2)}\frac{d\theta}{\cos^3(\theta)}=\left(\frac{\sqrt{4-\ell^2}}{\ell^2}+\frac 1 4 \ln\left(\frac{2+\sqrt{4-\ell^2}}{2-\sqrt{4-\ell^2}}\right)\right)
$$
Being clearly
$$
\quad |\Delta|=\sum_{i=1}^3  \frac{ \ell_i \sqrt{4-\ell_i}}{4},
$$
we deduce
\be\label{acutevalue}
{\mathcal E}(\Delta)=\frac 1 3 +\frac 1 {12} \frac{\sum_{i=1}^3 \ell_i^3\ln \left(\frac{2+\sqrt{4-\ell_i^2}}{2-\sqrt{4-\ell_i^2}}\right)}{\sum_{i=1}^3 	\ell_i\sqrt{4-\ell_i^2}},
\ee
For the equilateral triangle inscribed in $B_1$ we have $\ell_1=\ell_2=\ell_3=\sqrt{3}$, and hence, by \eqref{acutevalue}:
\be\label{valueatequil}
{\mathcal E}(\Delta_{eq})=\frac 1 3 +\frac {\ln(3)} 4.
\ee
Hence, to prove \eqref{isoper}, we need to show that
\be\label{daprovare}
\sum_{i=1}^3 \ell_i^3\ln\left(\frac{2+\sqrt{4-\ell_i^2}}{2-\sqrt{4-\ell_i^2}}\right)-3\ln(3)\sum_{i=1}^{3}\ell_i\sqrt{4-\ell_i^2}\le 0.
\ee
for any triplet $(\ell_1,\ell_2,\ell_3)$ which corresponds either to an acute triangle inscribed in $B_1$ or to a right and isosceles triangle inscribed in $B_1$.
For the sake of brevity we denote by $\mathcal{L}(\ell_1,\ell_2,\ell_3)$ the left-hand side of \eqref{daprovare}.

Let $(\ell_{1n},\ell_{2n},\ell_{3n})$ any maximizing sequence for $\mathcal{L}$ and $\Delta_n\subset B_1$ the corresponding triangles. By Blashcke Selection Theorem, up to sub sequence, the sequence $\overline{\Delta}_n$ converges to some closed set $\overline{\Delta}_\infty$.  Also, since any $\overline{\Delta}_n$ contains the origin, the limit set $\overline{\Delta}_\infty$ contains the origin as well. 
In particular,  either $\overline{\Delta}_{\infty}$ is a diameter of $B_1$ or $\Delta_\infty$ is a right triangle or an acute triangle. 
Assume that $\overline{\Delta}_\infty$ is a diameter of $B_1$. Then we must have (possibly relabeling the indexes)
$$
\lim_{n\to\infty}\ell_{1n}=0, \quad \lim_{n\to\infty}\ell_{2n}=2,\ \lim_{n\to\infty}\ell_{3n}=2,
$$
and therefore 
$$
\lim_{n\to+\infty} \mathcal{L}(\ell_{1n},\ell_{2n},\ell_{3n})=0,
$$
proving, by the maximality of the sequence $(\ell_{1n},\ell_{2n},\ell_{3n})$, \eqref{daprovare} and consequently the proposition. 
Hence, let us suppose that $\Delta_\infty$ is a triangle.
If is a right triangle, then by Lemma \ref{reductionacute} must be also isosceles.
In this case we have, up to relabel indexes, $\ell_1=2$ and $\ell_2=\ell_3=\sqrt{2}$ and we readily verify that $\mathcal{L}(2,\sqrt{2},\sqrt{2})<0$.
Thus suppose that $\Delta_\infty$ is an acute triangle. Being so we can exploit optimality conditions and is convenient to introduce the angular coordinates $x_1,x_2,x_3>0,$  defined through $\cos(x_i)=\ell_i/2$, for $i=1,2,3$.
Clearly, any acute triangles inscribed in $B_1$ satisfies
\be\label{vincoli}
x_1,x_2,x_3\in (0,\pi/2), \quad x_1+x_2+x_3=\pi/2,
\ee
and we have
\be\label{angularcoordinate}
\mathcal{L}(\ell_1,\ell_2,\ell_3)=8\left(\sum_{i=1}^3 \cos^3(x_i)\ln\left(\frac{1+\sin(x_i)}{1-\sin(x_i)}\right)-\frac{3\ln(3)}{2}\sum_{i=1}^3\cos(x_i)\sin(x_i)\right).
\ee

For the sake of brevity we denote by $f(x),g(x)$ the following quantities
$$
f(x)=\cos^3(x)\ln\left(\frac{1+\sin(x)}{1-\sin(x)}\right),\quad g(x)=\cos x \sin x.
$$
We have
$$
f'(t)=-3\cos^2(t)\sin(t)\ln\left(\frac{1+\sin t}{1-\sin t}\right)+2\cos^2 t,\quad 
g'(t)=\cos(2t).
$$
Differentiate the right hand side of \eqref{angularcoordinate} at a critical point, taking also into account of the constraints \eqref{vincoli}, gives the following conditions 
\be\label{condizioniottimalita}
f'(x_1)-\frac{3\ln(3)}{2}g'(x_1)=f'(x_2)-\frac{3\ln(3)}{2}g'(x_2)
=f'(x_3)-\frac{3\ln(3)}{2}g'(x_3)=\Lambda,
\ee
where $\Lambda>0$ is the Lagrange multiplier.
By studying the function 
$$
t\mapsto f'(t)-\frac{3\ln(3)}{2}g'(t)=\cos^2 t-\frac{3\ln(3)}{2}\cos(2t)-3\cos^2t\sin t\ln\left(\frac{1+\sin t}{1-\sin t}\right).
$$ 
we can show that \eqref{condizioniottimalita} implies that at least two of the three coordinates $(x_1,x_2,x_3)$ coincide. The proof of this claim is elementary 
and we give the details in Appendix \ref{sapp} Lemma \ref{isoscele}. Hence the triangle $\Delta_\infty$ has to be isosceles.

To conclude we show that that among isosceles triangles inscribed in $B_1$, $\mathcal{L}$ assume its maximum at the equilateral one.
Indeed let $\Delta$ be isosceles and let $\ell_1,\ell_2,\ell_3$ be its side lengths. Suppose that $\ell_1=\ell_2=\ell\in [\sqrt{2},2)$ (the interval is determined by the fact that we are considering acute triangles inscribed in $B_1$).
By Heron's formula we have
\be\label{Heron}
|\Delta|=\frac{\ell_3\sqrt{\ell^2-(\ell_3/2)^2}}{2}.
\ee
Since $|\Delta|=\ell_1\ell_2\ell_3/4$, \eqref{Heron} implies $
\ell_3=\ell\sqrt{4-\ell^2}$ and we can express ${\mathcal E}(\Delta)$ as  function of $\ell$ (writing ${\mathcal E}(\Delta)={\mathcal E}(\ell)$). Precisely, by \eqref{acutevalue}, we get
$$
{\mathcal E}(\ell)=\frac 1 3 \left(1-\frac{(4-\ell^2)}{4}\ln\left(\frac{4-\ell^2}{\ell^2}\right)+\frac{\ln\left(\frac{2+\sqrt{4-\ell^2}}{2-\sqrt{4-\ell^2}}\right)}{2\sqrt{4-\ell^2}}\right).
$$
Is convenient the change of variables  $\xi=\sqrt{4-\ell^2}\in (0,\sqrt{2})$. With such a choice the right-hand side of the latter identity is equal to the following function: 
$$
\xi\mapsto\frac{1}{3}\left(1-\frac{\xi^2}{4}\ln\left(\frac{\xi^2}{4-\xi^2}\right)+\frac{\ln\left(\frac{2+\xi}{2-\xi}\right)}{2\xi}\right),
$$
By optimizing the above expression with respect to $\xi\in (0,\sqrt{2})$ we obtain that the maximum value correspond to $\xi=1$, that is when  $\ell=\ell_3=\sqrt{3}$.
Again the details are elementary and we prefer to refer the reader to Lemma \ref{optimiso} in Appendix \ref{sapp}.
 Hence the optimal isosceles triangle has to be equilateral and this proves the thesis.
\end{proof}

An important  tool that we shall use the  proof of Theorem \ref{brbu05}  is the  Delaunay triangulation of a family of points \cite{De}. For reader convenience we briefly recall the main definitions and properties, following the monograph \cite{AKL} to which we refer for more details.

Given a  family of points $\mathcal{S}$, the Voronoi cell of a point $p\in\mathcal{S}$  is defined by
\be\label{voronoicell}
V(p)=\{x\in\R^2\ :\ |x-p|\le |x-q|, \hbox{ for every }q\in\mathcal{S}\}.
\ee
To construct $V(p)$ it is enough to consider for any other point $q\in\mathcal{S}$, the bisector line of the segment $\overline{pq}$ (i.e. the set of all points having the same distance from $p$ and $q$). Such a line  divides $\R^2$ into two half plane. By denoting with $H_{pq}$ the one containing  $p$, we have
$$
V(p)=\bigcap_{q\in\mathcal{S}}H_{pq}.
$$
In particular $V(p)$ is a convex, possibly unbounded polygon. The edges and the vertexes, of the family of polygons $V(p)$ are called Voronoi edges and Voronoi vertexes. The union of Voronoi edges and vertexes generate a planar straight-line graph, which is commonly known as the Voronoi diagram associated to $\mathcal{S}$. A Voronoi diagram naturally determine a partition of $\R^2$, made up of convex regions with mutually disjoint interior.

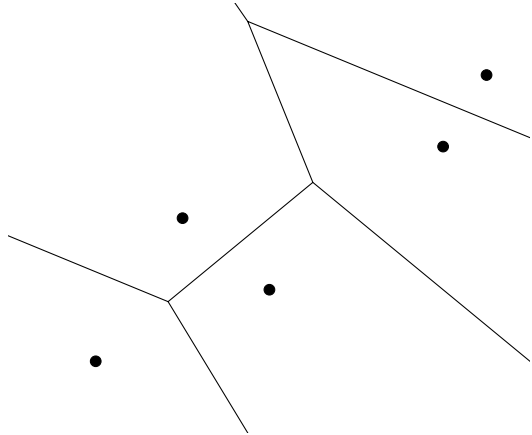
\begin{figure}[h]
\begin{tikzpicture}
\pgfplotsset{ticks=none}
\begin{axis}[mark=none,
xmin=0,
xmax=6,
ymin=0,
ymax=6,
hide axis, 
]
\addplot [only marks, black] table {points.dat};
\addplot [no markers, update limits=false] table {voronoi.dat};
\end{axis}
\end{tikzpicture}
\caption{\textit{An example of Voronoi diagram spanned by five points}}
\end{figure}

Starting from the Voronoi diagram of  $\mathcal{S}$ one may build the associated Delaunay tessellation: this is the straight-line graph with vertex set $\mathcal{S}$ determined by saying that a segment connecting two points of $\mathcal{S}$ belongs to the graph if and only if the Voronoi regions $V(p)$ and $V(q)$ are edge-adjacent.
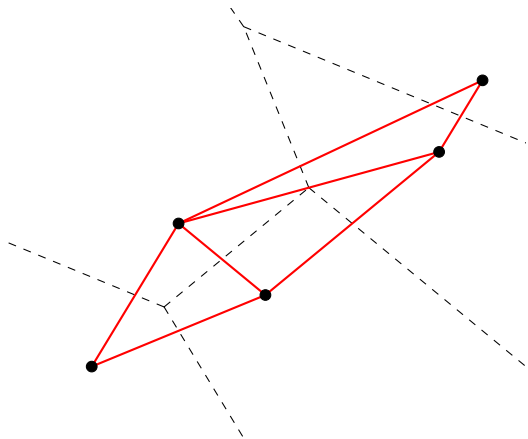
\begin{figure}[h]
\begin{tikzpicture}
\pgfplotsset{ticks=none}
\begin{axis}[mark=none,
xmin=0,
xmax=6,
ymin=0,
ymax=6,
hide axis, 
]
\addplot [only marks, black] table {points.dat};
\addplot [no markers, dashed, update limits=false] table {voronoi.dat};
\addplot [thick, red, domain= 1:2] {(x-1)*(3-1)/(2-1)+1};
\addplot [thick, red, domain= 1:3] {(x-1)*(2-1)/(3-1)+1};
\addplot [thick, red, domain=2:3]{(x-2)*(2-3)/(3-2)+3};
\addplot [thick, red, domain=2:5.5]{(x-2)*(5-3)/(5.5-2)+3};
\addplot [thick, red, domain=2:5]{(x-2)*(4-3)/(5-2)+3};
\addplot [thick, red, domain=3:5]{(x-3)*(4-2)/(5-3)+2};
\addplot[thick, red, domain=5:5.5]{(x-5)*(5-4)/(5.5-5)+4};
\end{axis}
\end{tikzpicture}
\caption{\textit{Delauney triangulation of five points, the dashed lines represent the Voronoi diagram.}}
\end{figure}

Any edge of the convex hull of $\mathcal{S}$ belongs to the Delaunay tessellation, although in some cases these are the only ones: consider for instance the case when all the points of $\mathcal{S}$ are co-linear or co-circular. 
If the family $\mathcal{S}$ does not lie in a single line, the Delaunay tessellation define a partition of the convex hull of $\mathcal{S}$ made up of convex polygons (called faces) which satisfy the \textit{empty-circle property}: the circle that circumscribes any polygon does not contain, in its interior, any other point of $\mathcal{S}$.

In general the  faces determined by the Delaunay tessellation can be polygons other than triangles (consider again the case of four co-circular points), however we can always add to the graph new edges to obtain a new graph which has only triangular faces, and for which the empty-circle property holds as well. 
We call any  graph obtained in such a way a Delaunay triangulation of $\mathcal{S}$.
The notions of Delaunay edge, vertex and graph come naturally with this last definition.

In the sequel, given $\eps>0$, we denote by $\eps\mathbb{Z}^2$ the usual lattice of points, given by
$$\eps\mathbb{Z}^2=\{(\eps i,\eps j)\subset\R^2:\ (i,j)\in \mathbb{Z}^2\}.$$

\begin{lemm}\label{delauney1}
Let $\eps>0$ and $\mathcal{S}\subset \eps\mathbb{Z}^2$ a finite set.  If $p,q\in \mathcal{S}$, and $|p-q|=\eps$, then any Delauney triangulation of $\mathcal{S}$ contains the segment connecting $p$ and $q$.
\end{lemm}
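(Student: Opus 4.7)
My plan is to verify the defining (empty-circle / Voronoi-adjacency) property directly by exploiting the fact that $\eps$ is the minimal distance in the lattice $\eps\Z^2$.

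First I would recall that, since every two distinct points of $\eps\Z^2$ are at mutual distance at least $\eps$, for any $s\in\mathcal{S}\setminus\{p,q\}$ one has $|s-p|\ge\eps$ and $|s-q|\ge\eps$. Let $m=(p+q)/2$ be the midpoint of $\overline{pq}$, so $|m-p|=|m-q|=\eps/2$. I would then show that for every such $s$, $|m-s|>\eps/2$: indeed, $|m-s|=\eps/2$ would place $s$ on the circle of diameter $\overline{pq}$, forcing $|s-p|^2+|s-q|^2=|p-q|^2=\eps^2$ (the Thales relation), which contradicts $|s-p|^2+|s-q|^2\ge 2\eps^2$.

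From this strict inequality, $m$ lies strictly closer to $\{p,q\}$ than to any other point of $\mathcal{S}$. By continuity of $x\mapsto|x-s|$, the same strict inequality holds on a whole neighborhood of $m$ intersected with the perpendicular bisector of $\overline{pq}$; this produces a nondegenerate segment lying in the common boundary $\partial V(p)\cap\partial V(q)$. Hence $V(p)$ and $V(q)$ are edge-adjacent in the Voronoi diagram, and therefore $\overline{pq}$ belongs to the Delaunay tessellation of $\mathcal{S}$, which sits inside every Delaunay triangulation.

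The argument is essentially a one-line use of the Thales angle characterization of the disk with diameter $\overline{pq}$, so I do not expect a real obstacle. The only subtle point is the degenerate/co-circular case, in which the Delaunay triangulation is not unique: I handle this by working at the level of the \emph{tessellation} (Voronoi edge-adjacency) rather than at the level of a particular triangulation, since any triangulation is obtained from the tessellation only by adding edges, so the segment $\overline{pq}$, once it is a tessellation edge, is automatically present in every triangulation.
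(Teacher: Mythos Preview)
Your argument is correct and follows essentially the same approach as the paper: both proofs use the midpoint $m$ of $\overline{pq}$ to show that the Voronoi cells $V(p)$ and $V(q)$ share an edge, the paper's version being a one-sentence sketch while you supply the quantitative justification. One tiny imprecision: your Thales step literally rules out only $|m-s|=\eps/2$, whereas you need $|m-s|\le\eps/2$ excluded; but the same reasoning (or directly the parallelogram identity $|s-p|^2+|s-q|^2=2|m-s|^2+\eps^2/2$) handles the strict inequality case as well.
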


\begin{proof}
Is enough to notice that, being $\mathcal{S}\subset\eps\mathbb{Z}^2$, the middle point  $m\in \R^2$ of the segment connecting $p$ and $q$, belongs to both $V(p)$ and $V(q)$ (which are the Voronoi cells defined through \eqref{voronoicell}), and cannot belong to some other Voronoi cells, to deduce that $V(p)$ and $V(q)$  are edge-adjacent. 
\end{proof}

We denote by $\mathcal{Q}_\eps$ the family of all the finite union of closed squares, of size $\eps$ and vertices in $\eps\mathbb{Z}^2$, that is
$$
\mathcal{Q}_\eps=\left\{\bigcup_{(i,j)\in I\times J} \eps([i-1,i]\times[j-1,j]) :\ I\times J\subset \mathbb{Z}^2,\ I\times J\ \hbox{finite sets of indexes}	\right\}.
$$
Given $Q\in\mathcal{Q}_\eps$ we define its $\eps$\textit{-discrete boundary} as $\pa_{d,\eps}Q=\pa Q\cap \eps\mathbb{Z}^2.
$ 

\begin{lemm}\label{delauney2}
Let $\eps>0$ and $Q\in\mathcal{Q}_\eps$. Then, any Delauney triangle of any Delauney triangulation of $\pa_{d,\eps}Q$ has interior which lies either in $Q$ or in the interior of $Q^c$. 
\end{lemm}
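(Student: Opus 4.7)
The plan is to reduce the statement to a direct consequence of Lemma \ref{delauney1}. First, I would observe that for any $Q\in\mathcal{Q}_\eps$, the topological boundary $\pa Q$ decomposes as a finite union of closed axis-aligned segments of length exactly $\eps$: each such segment is a side of one of the unit lattice squares making up $Q$ which is not shared with an adjacent square of $Q$. Crucially, every endpoint of any such segment lies in $\pa Q \cap \eps\mathbb{Z}^2 = \pa_{d,\eps} Q$, so $\pa Q$ is exactly the union of these unit segments and each segment has both endpoints in $\pa_{d,\eps}Q$ at mutual distance $\eps$.

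Next, by Lemma \ref{delauney1} every such unit segment is itself an edge of any Delaunay triangulation of $\pa_{d,\eps} Q$. Since a Delaunay triangulation is a planar straight-line decomposition of the convex hull of its vertex set into triangles with pairwise disjoint open interiors, a Delaunay edge lies on the boundary of at most two of the triangular faces and cannot meet the interior of any triangle. Taking the union over all boundary segments yields
\[
\pa Q \cap T^\circ = \emptyset
\]
for every Delaunay triangle $T$ of any Delaunay triangulation of $\pa_{d,\eps} Q$.

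To finish, I would use a connectedness argument. Since $Q$ is closed we have the disjoint decomposition $\R^2 = \mathrm{int}(Q)\cup \pa Q\cup \mathrm{int}(Q^c)$, and each connected component of the open set $\R^2 \setminus \pa Q$ is contained entirely in $\mathrm{int}(Q)$ or entirely in $\mathrm{int}(Q^c)$ (otherwise a path inside the component would cross $\pa Q$). Being open, connected and disjoint from $\pa Q$, the set $T^\circ$ lies in a single such component, and hence $T^\circ \subset Q$ or $T^\circ\subset \mathrm{int}(Q^c)$, which is the claimed dichotomy.

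I do not anticipate a substantive obstacle here: the only step that requires any care is checking that the length-$\eps$ segments making up $\pa Q$ automatically have both endpoints in $\pa_{d,\eps} Q$, which is precisely what is needed to invoke Lemma \ref{delauney1}. No empty-circle analysis, no geometric case distinction on the shape of $T$, and no use of the specific location of the third vertex of $T$ is required.
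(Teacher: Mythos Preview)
Your proposal is correct and follows essentially the same approach as the paper: the paper's proof is the one-line observation that, by Lemma~\ref{delauney1}, the whole boundary $\pa Q$ consists of Delaunay edges and hence cannot be crossed by any Delaunay triangle. Your write-up simply makes explicit the two ingredients behind that sentence (the decomposition of $\pa Q$ into length-$\eps$ lattice segments with endpoints in $\pa_{d,\eps}Q$, and the connectedness argument), which is a faithful elaboration of the paper's argument rather than a different route.
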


\begin{proof}
The thesis follows from the fact that, by Lemma \eqref{delauney1} the whole boundary $\pa Q$ is made of Delauney edges, and hence cannot be crossed by any triangle.
\end{proof}

\begin{lemm}\label{delauney3}
Let $\eps>0$ and $Q\in\mathcal{Q}_\eps$. Let $\Delta$ be a Delauney triangle $\Delta$ of a Delauney triangulation  of $\pa_{d,\eps}Q$. If $\Delta\subset Q$, then the center of the circle circumscribed to $\Delta$ belongs to $Q$ as well.
\end{lemm}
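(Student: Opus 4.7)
The plan is to argue by contradiction: assume $c \notin Q$, and derive an impossibility from the Delaunay empty-circle property applied to lattice points on $\partial Q$, combined with the axis-aligned structure of $\partial Q$. Set $h := d(c,Q) > 0$ and let $p_0 \in \partial Q$ realize this distance. For any edge of $\Delta$, the midpoint $M$ lies in $\Delta \subset Q$, and since $c$ is the circumcenter, $|cM| = \sqrt{R^2 - \ell^2/4} < R$ where $\ell$ is the edge length, forcing $h < R$. Now $p_0$ cannot be a lattice point of $\pa_{d,\eps}Q$, because otherwise $p_0 \in B(c,R)^{\circ}$ while $p_0$ is distinct from the three vertices of $\Delta$ (which are at distance exactly $R$ from $c$), violating the empty-circle property.

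Hence $p_0$ lies in the relative interior of an edge $e = [a,b] \subset \partial Q$ of length $\eps$. Up to rotation and translation, take $e$ horizontal at $y=0$, $p_0$ at the origin, $c = (0,-h)$, and $Q$ locally above $y=0$, so that the $\eps$-square just below $e$, namely $S' := [-s_a, s_b] \times [-\eps, 0]$, satisfies $S'^{\circ} \subset Q^c$, where $a = (-s_a, 0)$, $b = (s_b, 0)$, and $s_a + s_b = \eps$. Applying empty circle to $a, b \in \pa_{d,\eps}Q$ yields $s_a^2 + h^2 \ge R^2$ and $s_b^2 + h^2 \ge R^2$; setting $L := \sqrt{R^2 - h^2}$, one gets $s_a, s_b \ge L$, and hence $L \le \eps/2$, so $R \le \sqrt{h^2 + \eps^2/4} \le h + \eps/2$.

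The disk $B(c,R)$ therefore protrudes above $y = 0$ by at most $\eps/2 < \eps$, so no lattice point with $y$-coordinate at least $\eps$ lies in it. Every vertex of $\Delta$ has $y$-coordinate in $\{0, -\eps, -2\eps, \ldots\}$, and vertices at $y = 0$ are restricted to $\{a, b\}$ (the chord at $y=0$ has half-length $L \le \eps/2$). Since $\Delta$ is non-degenerate, at least one vertex sits at $y \le -\eps$. A finite case analysis according to whether $a$ and/or $b$ are vertices of $\Delta$ now closes the proof: in each case the admissible configurations either force an edge of $\Delta$ to cross into $S'^{\circ} \subset Q^c$ (contradicting $\Delta \subset Q$), or force an additional point of $\pa_{d,\eps}Q$ strictly inside $B(c,R)$ (contradicting empty circle), or place the circumcenter inside a $Q$-square bordering $e$ at a concave corner, contradicting $c \notin Q$. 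The main obstacle is precisely this last step: translating the rough intuition that the disk is too narrow to allow a non-degenerate triangle on one side of $e$ with circumcenter on the other into a clean contradiction in each sub-case requires combining the sharp width bound with the rigid lattice structure of $\pa_{d,\eps}Q$ on the circle.
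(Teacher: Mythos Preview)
Your setup is correct and is essentially the same framework as the paper's proof: locate the nearest boundary point $p_0$ to the circumcenter $c$, observe it lies in the interior of a boundary edge $e=[a,b]$ of length $\eps$, and apply the empty-circle property to the endpoints $a,b\in\pa_{d,\eps}Q$ to get $s_a,s_b\ge L:=\sqrt{R^2-h^2}$, hence $L\le\eps/2$. The gap is that you then abandon this clean inequality and embark on a case analysis that you yourself flag as incomplete.

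No case analysis is needed. You already observed that the midpoint $M$ of any edge of $\Delta$ lies in $Q$ and satisfies $|cM|=\sqrt{R^2-\ell^2/4}$. Apply this to the \emph{longest} edge: since $c\notin\Delta$ the triangle is strictly obtuse, and the nearest point of $\Delta$ to $c$ is the midpoint of the side opposite the obtuse angle, so
\[
h=d(c,Q)\le d(c,\Delta)=\sqrt{R^2-\ell_{\max}^2/4},\qquad\text{i.e.}\quad L\ge \frac{\ell_{\max}}{2}.
\]
Combining with $L\le\eps/2$ gives $\ell_{\max}\le\eps$. But the two shorter sides of $\Delta$ join distinct points of $\eps\Z^2$ and hence have length $\ge\eps$, while in a strictly obtuse triangle the side opposite the obtuse angle is strictly longer than the other two; thus $\ell_{\max}>\eps$, a contradiction. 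This is exactly the paper's argument, phrased there as ``the boundary segment through $z$ must be longer than $|AB|$, yet has length $\eps$''.
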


\begin{proof}

Let us denote by $x_0$ be the center of the circle circumscribed to $\Delta\subset Q$ and suppose by contradiction that $x_0\not\in Q$. Then, $\Delta$ is an obtuse triangle. Let $A$ and $B$ be the vertices of the side faced to the largest angle. Let also $A'$ and $B'$ be the projection of $A$ and $B$ on the diameter parallel of the circle circumscribed to $\Delta$, parallel to the side of $\Delta$ connecting $A$ and $B$.

The side connecting $A$ and $B$ has not minimal length, and so, by Lemma \ref{delauney1}, cannot be a boundary side. Therefore, if we consider the region $R\subset\R^2$, determined by $A,B,A'$ and $B'$, there exists $z\in \pa Q\cap R$ such that
$$
|z-x_0|=\min\{|x-x_0|:\ x\in Q\}.
$$ 


The point $z$ has to be contained in the interior of a boundary side of length $\eps$. However, by the empty circle property of the triangle $\Delta$ the vertices of such a segment cannot be contained in the circle circumscribed to $\Delta$. Hence the length of this segment has to be greater then the length of the segment connecting $A$ and $B$, in contradiction whit is minimality.

\end{proof}

We are now in a position to prove Theorem \ref{brbu05}.
\par
\noindent{\bf Proof of Theorem \ref{brbu05}.}
In order to estimate form above the value $\Phi_{\infty}(\O)$, by possibly replacing $\O$ with $\O\cap B_R$ and sending $R\to\infty$, without any loss of generality, we can clearly assume $\O$ to be a bounded open set.

Let $\eps>0$, we define $\overline{\O}_\eps$ to be the following closed set,
$$
\overline\O_\eps:=\bigcup_{Q\in \mathcal{Q}_\eps, Q\Subset \O}Q,
$$
and $\O_\eps$ to be the interior of $\overline{\O}_\eps$. 
Clearly we have
$\chi_{\O_\eps}(\cdot)\to \chi_{\O}(\cdot)$ pointwise (and in $L^1$).
Also, being $\bigcup_{\eps>0}\O_\eps=\O$  we also have 
$$\lim_{\eps\to 0}d_{H}(\O_\eps^c,\O^c)=0,$$ 
where $d_H$ denotes the usual Hausdorff metric (see for instance \cite{He}). The latter in particular implies that $d(\cdot,\O_\eps^c)$ converge to $d(\cdot,\O^c)$ in $L^{\infty}(\R^N)$, and consequently that
\be\label{polygapp}
\lim_{\eps\to 0}\Phi_{\infty}(\O_\eps)= \Phi_{\infty}(\O).
\ee
We define the following quantity:
$$
\Phi_{d,\infty}(\O_\eps)=\frac{\int_{\O_\eps}d(x,\pa_{d,\eps}\O_\eps)dx}{|\O_\eps|\|d(\cdot,\pa_{d,\eps}\O_\eps)\|_{L^{\infty}(\O_\eps)}}.
$$
Notice that, for every $x\in \O_\eps$, it hold:
\be\label{bndryapp}
d(x,\pa\O_\eps)\le d(x,\pa_{d,\eps}\O_\eps), \quad d^2(x,\pa_{d,\eps}\O_\eps)\le d^2(x,\pa\O_\eps)+\frac{\eps^2}{2}.
\ee

By \eqref{bndryapp}, we deduce that, if there exists $\eps_0>0$ and $m>0$ such that $\Phi_{d,\infty}(\O_\eps)\le m$ for every $\eps<\eps_0$, then $\Phi_{\infty}(\O)\le m$. Indeed, if such an $\eps_0$ exists then for every $\eps<\eps_0$ we have
\[
\begin{split}
&\Phi_{\infty}(\O_\eps)=\frac{\int_{\O_\eps}d(x,\pa\O_\eps)dx}{|\O_\eps| \|d(\cdot,\pa\O_\eps)\|_{L^{\infty}(\O_\eps)}}\le \frac{\int_{\O_\eps}d(x,\pa_{d,\eps}\O_\eps)dx}{|\O_\eps|\|d^2(\cdot,\pa_{d,\eps}\O_\eps)-\frac{\eps^2}{2}\|^{1/2}_{L^{\infty}(\O_\eps)}}\\
&\le \Phi_{d,\infty}(\O_\eps)\frac{\|d(\cdot,\pa_{d,\eps}\O_\eps)\|_{L^{\infty}(\O_\eps)}}{\|d^2(\cdot,\pa_{d,\eps}\O_\eps)-\frac{\eps^2}{2}\|^{1/2}_{L^{\infty}(\O_\eps)}}\le m\frac{\|d(\cdot,\pa_{d,\eps}\O_\eps)\|_{L^{\infty}(\O_\eps)}}{\|d^2(\cdot,\pa_{d,\eps}\O_\eps)-\frac{\eps^2}{2}\|^{1/2}_{L^{\infty}(\O_\eps)}}.
\end{split}
\]
Passing to the limit as $\eps\to 0$ and taking into account of \eqref{polygapp}, we obtain the desired inequality.
Hence we can focus on proving that, for $\eps$ small enough it holds
\be\label{disapp}
\Phi_{d,\infty}(\O_\eps)\le \frac 1 3 +\frac {\ln(4)} 3.
\ee

Now, we fix $\eps_0>0$ to be sufficiently small such that, if $\eps<\eps_0$, the family $\pa_{d,\eps}\O_\eps$ contains at least three non co-linear points, we fix a Delaunay triangulation of $\pa_{d,\eps}\O_\eps$, and we denote with $\mathcal{F}$ the family of the triangles contained in $\O_\eps$ of  the chose Delauney triangulation (which is well defined by virtue of Lemma \ref{delauney2}). 
We notice that for every $\Delta\in \mathcal{F}$, and $x\in \Delta$, the empty-circle property gives 
$d(x,\pa_{d,\eps}\O_\eps)=d(x,V(\Delta))$.
Furthermore by Lemma \ref{delauney3} we also get $r(\Delta)\le \|d(\cdot,\pa_{d,\eps}\O_\eps)\|_{L^{\infty}(\O_\eps)}$, where we recall that $r(\Delta)$ denotes the radius of the circle circumscribed to $\Delta$.

Therefore, we have
\[
\Phi_{d,\infty}(\O_\eps)=\frac{\sum_{\Delta\in\mathcal{F}} \int_{\Delta}d(x,\pa_{d,\eps}\O_\eps)dx}{\left(\sum_{\Delta\in\mathcal{F} }|\Delta|\right)\|d(\cdot,\pa_{d,\eps}\O_\eps)\|_{L^{\infty}(\O_\eps)}}\le \frac{\sum_{\Delta\in\mathcal{F}}{\mathcal E}(\Delta)|\Delta|}{\sum_{\Delta\in\mathcal{F} }|\Delta|}
\]
and in particular
$$\Phi_{d,\infty}(\O_\eps)\le \max_{\Delta\in \mathcal{F}}{\mathcal E}(\Delta).$$
The conclusion follows at once by applying Proposition \ref{lemma2}, to obtain
$$
\Phi_{d,\infty}(\O_\eps)\le {\mathcal E}(\Delta_{eq}),
$$
which recalling \eqref{valueatequil} implies \eqref{disapp} and, hence, 
$$
\Phi_{\infty}(\O)\le \frac{1}{3}+\frac{\ln(4)}{3}.
$$
To conclude the proof is enough to consider the following construction. Let $E\subset\R^2$ be the hexagon centered at the origin with unitary side. For every $\eps>0$ we fix $C_\eps\subset\R^2$ to be a set of points $y$ such that the family $\{\eps E+y\}_{y\in C_\eps}$ produces  an hexagonal tiling for $\R^2$. We then define $\O_\eps$  as 
$$
\O_\eps= B(0,1) \setminus \left\{y\in C_\eps: \eps E+y \Subset B(0,1) \right\}.
$$
As $\eps\to 0$, the effects of the boundary of $B(0,1)$ become negligible and one has
$$
\lim_{\eps\to 0}\Phi_{\infty}(\O_\eps)=\frac{1}{\eps}\left(\frac{|\O|}{|\eps E|}\right)\frac{\int_{\eps E}|x|dx}{|\O|}=\avint_{E}|x|dx=\frac{1}{3}+\frac{\ln(3)}{4}.
$$
This concludes the theorem.\qed

\section {Further remarks and applications}\label{brbu10}

As a byproduct of our analysis we obtain some information concerning  a shape optimization problem that recently has been investigated  : those of comparing the torsional rigidity $T_p(\O)$, defined as  
\be\label{torsionalrigidity}
T_p(\O)=\left(\int_\O w_{p,\O}(x)dx\right)^{p-1},
\ee
 and the principal frequency of the $p$-Laplace operator $\la_p(\O)$, variationally characterized by means of the following Rayleigh quotient:
\be\label{eigenvalue}
\la_p(\O)=\inf\left\{\frac{\int_\O|\nabla u(x)|^pdx}{\int_\O|u(x)|^pdx}: u\in W^{1,p}_0(\O)\setminus \{0\}\right\}.
\ee

A consequence  Theorem \ref{brbu04} is that that for $p>N$ 
$$
\sup\left\{\frac{\la_p(\O)T_p(\O)}{|\O|^{p-1}}:\ \O\sub\R^N,\ \hbox{open set with}\ 0<|\O|<+\infty\right\}<1.
$$
This supremum above was proved  to be equal to $1$  in \cite{befe} for $p=2$ and in  \cite{bribu} for $p\le N$.
The following Corollary gives a positive answer to the Open problem $2$ of \cite{bribu}.
\begin{coro}
Let $p>N$ and $F_p(\O)$ be the shape functional defined by 
$$
F_{p}(\O)=\frac{\la_p(\O)T_p(\O)}{|\O|^{p-1}},
$$
where $T_p(\O)$ and $\la_p(\O)$ are respectively defined through \eqref{torsionalrigidity} and \eqref{eigenvalue}.
Then 
$$\sup \left\{F_{p}(\O):\ \O\subset\R^N,\hbox{ open set with }  0<|\O|<+\infty\right\}<1.$$
\end{coro}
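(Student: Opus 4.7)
The plan is to reduce the supremum bound for $F_p$ directly to Theorem \ref{maintorsion} by exploiting $w_{p,\Omega}$ as a competitor in the Rayleigh quotient defining $\lambda_p(\Omega)$. Since $w_{p,\Omega}\in W^{1,p}_0(\Omega)$ and is strictly positive, it is an admissible test function, giving
$$\lambda_p(\Omega)\le \frac{\int_\Omega|\nabla w_{p,\Omega}(x)|^p\,dx}{\int_\Omega w_{p,\Omega}^p(x)\,dx}.$$
Then I would apply the identity \eqref{torsionprop}, namely $\int_\Omega|\nabla w_{p,\Omega}|^p\,dx=\int_\Omega w_{p,\Omega}\,dx$, to replace the numerator, obtaining
$$\lambda_p(\Omega)\le \frac{\int_\Omega w_{p,\Omega}(x)\,dx}{\int_\Omega w_{p,\Omega}^p(x)\,dx}.$$

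Next I would multiply both sides by $T_p(\Omega)=\bigl(\int_\Omega w_{p,\Omega}\bigr)^{p-1}$ and divide by $|\Omega|^{p-1}$. A short algebraic rearrangement shows
$$F_p(\Omega)=\frac{\lambda_p(\Omega)T_p(\Omega)}{|\Omega|^{p-1}}\le \frac{\bigl(\int_\Omega w_{p,\Omega}\bigr)^{p}}{|\Omega|^{p-1}\int_\Omega w_{p,\Omega}^{p}}=\frac{\Bigl(\frac{1}{|\Omega|}\int_\Omega w_{p,\Omega}\Bigr)^p}{\frac{1}{|\Omega|}\int_\Omega w_{p,\Omega}^p}=\Psi_p(\Omega)^p,$$
where $\Psi_p$ is the functional introduced just before Theorem \ref{maintorsion}.

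At this point the conclusion is immediate: by Theorem \ref{maintorsion}, there exists a constant $c=c(N,p)<1$ such that $\Psi_p(\Omega)\le c$ for every admissible $\Omega$, whence $F_p(\Omega)\le c^p<1$ uniformly. There is really no obstacle to overcome here; all the analytic work has already been done in Lemma \ref{lemmakey} and Theorem \ref{maintorsion} via the quantitative Jensen inequality of Lemma \ref{quantitativeJensen}. The only point worth double-checking is that the restriction $p>N$ is exactly the hypothesis needed for Theorem \ref{maintorsion} to apply, and that the test-function estimate does not degenerate (which is guaranteed because $w_{p,\Omega}>0$ in $\Omega$, so $\int_\Omega w_{p,\Omega}^p>0$).
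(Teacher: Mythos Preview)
Your proof is correct and follows essentially the same approach as the paper: both use $w_{p,\Omega}$ as a test function in the Rayleigh quotient, apply \eqref{torsionprop}, and rearrange to obtain $F_p(\Omega)\le \Psi_p(\Omega)^p$, then invoke Theorem \ref{maintorsion}. The algebraic identification of the right-hand side with $\Psi_p(\Omega)^p$ is exactly what the paper does.
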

\begin{proof}
For every $\O\subset\R^N$ open set of positive and finite measure, we have
$$
\avint_{\O}|w_{p,\O}(x)|dx\le \sup_{\O}\Psi_p(\O)\left(\avint_{\O}|w_{p,\O}(x)|^pdx\right)^{1/p}
$$
Since $w_{p,\O}\in W^{1,p}_0(\O)$ is admissible as a test function for $\la_p(\O)$  we have
$$
\la_p(\O)\le \frac{\int _\O|\nabla w_{p,\O}(x)|^pdx}{\int_{\O}|w_{p,\O}(x)|^pdx}.$$
Taking also into account of \eqref{torsionprop} we obtain
$$\la_p(\O)\le \frac{\avint_\O | w_{p,\O}(x)|dx}{\avint_{\O}|w_{p,\O}(x)|^pdx}\le \frac{|\O|^{p-1}\left(\sup_\O\Psi_p(\O)\right)^p}{\left(\int_{\O}|w_{p,\O}(x)|dx\right)^{p-1}}.
$$
This precisely means
$$
F_p(\O)\le \left(\sup\left\{\Psi_p(\O): \ \O\subset\R^N,\hbox{ open set with }  0<|\O|<+\infty\right\} \right)^{p},
$$
and the thesis follows by Proposition \ref{maintorsion}.
\end{proof}
\begin{rem}
It is interesting to recall that when $\O$ is a bounded convex domain, the following inequalities hold true
\be\label{resulthelupiCONVEX}
\frac{1}{(N+1)^2}\le \Phi_2(\O)\le \frac{2}{3}.
\ee
The convex case is exhaustively extended for  $p\neq 2$ in \cite{DeGaGu}, where the authors show that
\be\label{resultDeGaGu}
\frac{p'}{N^{p'-1}(N+p')}\le \Phi_p(\O)\le \frac{p'}{p'+1}.
\ee

The right side inequality, in both \eqref{resulthelupiCONVEX} and \eqref{resultDeGaGu}, is sharp, and equality can be obtained by a suitable sequence of thinning rectangles type domains while the sharp lower bound to $\Phi_p$, among convex domains, is up to our knowledge still not known (notice that when $p=2$, the results of \cite{DeGaGu} slightly improves the lower bound given in \eqref{resulthelupiCONVEX}). 

Inequalities similar to those  in \eqref{resultDeGaGu} are obtained in \cite{DeGaGu} also in the more general setting of the anisotropic $p$-Laplace operators.

It is worth to notice that, in \cite{HeLuPi}, the value $(N+1)^{-1}$ is conjectured to be the sharp lower bound to $\Phi_2$ among planar convex sets.
 \end{rem}

\appendix\label{sapp}
\section{}

We give here the proof of the elementary computations we used to prove Proposition \ref{lemma2}.

\begin{lemm}\label{isoscele}
Let $\mathrm{L}:(0,\pi/2)\to \R$ be defined by 
$$
\mathrm{L}(t)=2 \cos^2 t-\frac{3\ln(3)}{2}\cos(2t)-3\cos^2t\sin t\ln\left(\frac{1+\sin t}{1-\sin t}\right).
$$
If $t_1,t_2,t_3$ are such that $\mathrm{L}(t_1)=\mathrm{L}(t_2)=\mathrm{L}(t_3)$, then either $t_1=t_2$ or $t_1=t_3$ or $t_2=t_3$.
\end{lemm}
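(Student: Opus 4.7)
The statement is equivalent to showing that $\mathrm{L}$ attains every value in its range at most twice, which will follow if $\mathrm{L}$ has at most one critical point in $(0,\pi/2)$. My plan is to prove exactly this by a one-variable calculus analysis after the substitution $s=\sin t\in(0,1)$.

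First, using $\cos^2 t=1-s^2$, $\cos(2t)=1-2s^2$ and $\ln\frac{1+s}{1-s}=2\,\mathrm{artanh}(s)$, I would rewrite
$$
\mathrm{L}(t)=h(s):=\Bigl(2-\tfrac{3\ln 3}{2}\Bigr)+(3\ln 3-2)s^2-6s(1-s^2)\,\mathrm{artanh}(s),
$$
so that $\mathrm{L}'(t)=h'(s)\cos t$. Since $\cos t>0$ on $(0,\pi/2)$, zeros of $\mathrm{L}'$ in $(0,\pi/2)$ correspond bijectively to zeros of $h'$ in $(0,1)$. A direct differentiation (using $\frac{d}{ds}\mathrm{artanh}(s)=\frac{1}{1-s^2}$) gives
$$
h'(s)=6\,\phi(s),\qquad \phi(s):=\bigl(\ln 3-\tfrac{5}{3}\bigr)s-(1-3s^2)\,\mathrm{artanh}(s),
$$
and it suffices to show that $\phi$ has exactly one zero in $(0,1)$.

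Next, I would differentiate twice to determine the shape of $\phi$. One computes
$$
\phi''(s)=6\,\mathrm{artanh}(s)+\frac{6s}{1-s^2}+\frac{4s}{(1-s^2)^2},
$$
and every term on the right is strictly positive on $(0,1)$, so $\phi'$ is strictly increasing there. Since $\phi'(0)=\ln 3-\tfrac{8}{3}<0$ and $\phi'(s)\to+\infty$ as $s\to 1^-$ (driven by the $\mathrm{artanh}$ term and the pole $\frac{1-3s^2}{1-s^2}\to -\infty$ of its derivative being absorbed by $6s\,\mathrm{artanh}(s)$), there is a unique $s_\star\in(0,1)$ with $\phi'(s_\star)=0$. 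Hence $\phi$ is strictly decreasing on $(0,s_\star)$ and strictly increasing on $(s_\star,1)$, with $\phi(0)=0$ and $\phi(s)\to+\infty$ as $s\to 1^-$ (again because $-(1-3s^2)\mathrm{artanh}(s)=2\,\mathrm{artanh}(s)\to+\infty$ near $1$). Consequently, $\phi$ has exactly one zero $s_{\star\star}\in(s_\star,1)$.

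This yields a unique critical point $t_\star=\arcsin(s_{\star\star})\in(0,\pi/2)$ of $\mathrm{L}$. Therefore $\mathrm{L}$ is strictly monotone on $(0,t_\star)$ and strictly monotone (in the opposite direction) on $(t_\star,\pi/2)$, so any level set $\{t:\mathrm{L}(t)=c\}$ contains at most two points. In particular, among any three values $t_1,t_2,t_3$ with $\mathrm{L}(t_1)=\mathrm{L}(t_2)=\mathrm{L}(t_3)$, the pigeonhole principle forces two of them to coincide. The main obstacle is the explicit positivity of $\phi''$ and the sign computations at the endpoints; everything else is bookkeeping.
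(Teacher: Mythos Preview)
Your argument is correct and follows essentially the same strategy as the paper: both show that $\mathrm{L}$ has a unique critical point in $(0,\pi/2)$, so that every level set contains at most two points. Your execution differs only cosmetically---you substitute $s=\sin t$ and go one derivative further, observing that $\phi''(s)=6\,\mathrm{artanh}(s)+\frac{6s}{1-s^2}+\frac{4s}{(1-s^2)^2}>0$ makes $\phi$ convex with $\phi(0)=0$ and $\phi(1^-)=+\infty$, whereas the paper stays in the variable $t$ and compares the two pieces of $\mathrm{L}'(t)$ directly; your route is arguably the tidier of the two.
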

\begin{proof}
One has 
$$
\mathrm{L}'(t)=(3\ln(3)-5)\sin(2t)-\ln\left(\frac{1+\sin(t)}{1-\sin(t)}\right)(9\cos^3(t)-6\cos(t)).
$$
In particular $\mathrm{L}'(t)\le 0$ if and only if
$$
(6\ln(3)-10)\sin(t)\ge \ln\left(\frac{1+\sin(t)}{1-\sin(t)}\right)(9\cos^2(t)-6)
$$
We denote respectively by $H_1(t)$ and $H_2(t)$ the left hand side and the right hand side of the previous inequality.
Notice that
$$
H_2(t)\ge 0 \Leftrightarrow 9\cos^2(t)-6\ge 0\Leftrightarrow t\le \cos^{-1}\left(\sqrt{\frac 2 3 }\right)
$$
and 
$$
H_2'(t)=\frac{2}{\cos(t) }\left(9\cos^2(t)  - 6 \right)-18 \ln\left(\frac{1+\sin (t)}{1-\sin (t)}\right)\cos (t) \sin (t) ,
$$
so that, when $t>\cos^{-1}(\sqrt{\frac{2}{3}})$, we get $H_2'(t)<0$. Being $H_1(t)\le 0$ this implies that there exists $t_0\in (0,\pi/2)$, such that 
$\mathrm{L}'(t)$ is negative for $t\le t_0$ and positive otherwise, which clearly imply the thesis.

\end{proof}

\begin{lemm}\label{optimiso}
Let  $f:\left[0,\sqrt{2}\right ]\to \R$  be defined by
$$
f(\xi)=\frac{1}{3}\left(1-\frac{\xi^2}{4}\ln\left(\frac{\xi^2}{4-\xi^2}\right)+\frac{\ln\left(\frac{2+\xi}{2-\xi}\right)}{2\xi}\right).
$$
Then $f$ reaches its maximum at $\xi=1$.
\end{lemm}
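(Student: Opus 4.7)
The statement is purely a one-variable calculus claim, so the plan is to do an explicit calculation of $f'$, identify $\xi=1$ as a critical point, check the boundary values of $f$ on $[0,\sqrt 2]$, and rule out any other interior critical point.

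First I would compute $f(1)$, $f(0^+)$, and $f(\sqrt 2)$ to see that the relevant competition is only between $\xi=1$ and the endpoints. At $\xi=1$ one has $\xi^2/(4-\xi^2)=1/3$ and $(2+\xi)/(2-\xi)=3$, so $f(1)=\tfrac13+\tfrac{\ln 3}{4}$, which is exactly the value appearing in \eqref{valueatequil}. At $\xi=\sqrt 2$ the middle logarithm vanishes and a direct computation gives $f(\sqrt 2)=\tfrac13\bigl(1+\tfrac{\ln(3+2\sqrt 2)}{2\sqrt 2}\bigr)$; a short numerical check (or the elementary bound $\ln(3+2\sqrt 2)=2\ln(1+\sqrt 2)<2\sqrt 2\cdot(\tfrac{\ln 3}{4}\cdot\tfrac{4}{\pi}\cdot\ldots)$—better, a direct two-line estimate) confirms $f(\sqrt 2)<f(1)$. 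Similarly $\lim_{\xi\to 0^+}f(\xi)=\tfrac12<f(1)$, using $\ln\bigl(\tfrac{2+\xi}{2-\xi}\bigr)/(2\xi)\to\tfrac12$ as $\xi\to 0^+$.

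Next I would differentiate. Writing $3f(\xi)=1-\tfrac{\xi^2}{4}\ln\!\bigl(\tfrac{\xi^2}{4-\xi^2}\bigr)+\tfrac{1}{2\xi}\ln\!\bigl(\tfrac{2+\xi}{2-\xi}\bigr)$, a straightforward computation (using $\tfrac{d}{d\xi}\ln\!\bigl(\tfrac{2+\xi}{2-\xi}\bigr)=\tfrac{4}{4-\xi^2}$ and collecting terms) gives
\[
3f'(\xi)=\frac{\xi}{2}\ln\!\Bigl(\frac{4-\xi^2}{\xi^2}\Bigr)-\frac{2\xi}{4-\xi^2}+\frac{2}{4-\xi^2}-\frac{1}{2\xi^2}\ln\!\Bigl(\frac{2+\xi}{2-\xi}\Bigr).
\]
Plugging in $\xi=1$: the first and fourth terms become $\tfrac12\ln 3$ and $-\tfrac12\ln 3$, while the second and third become $-\tfrac23$ and $+\tfrac23$, so indeed $f'(1)=0$.

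The delicate step, and what I expect to be the main obstacle, is showing that $\xi=1$ is the \emph{only} critical point of $f$ in $(0,\sqrt 2)$. My plan is to clear denominators by introducing
\[
F(\xi):=6\xi^2(4-\xi^2)\,f'(\xi),
\]
which is a smooth function on $[0,\sqrt 2)$ having the same zero set as $f'$ in $(0,\sqrt 2)$. Expanding, $F$ is a combination of two logarithmic terms, $\xi^3(4-\xi^2)\ln\!\bigl(\tfrac{4-\xi^2}{\xi^2}\bigr)$ and $-(4-\xi^2)\ln\!\bigl(\tfrac{2+\xi}{2-\xi}\bigr)$, plus the polynomial $-4\xi^3+4\xi^2$. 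The plan is to show that $F'$ vanishes at most twice on $(0,\sqrt 2)$ and that $F$ has sign $+$ near $0$ (using the asymptotics $\ln\!\bigl(\tfrac{4-\xi^2}{\xi^2}\bigr)\to+\infty$) and sign $-$ near $\sqrt 2$ (since the first logarithm vanishes at $\sqrt 2$ while the polynomial and second-logarithm pieces combine to a strictly negative value). By Rolle's theorem this forces $F$, and hence $f'$, to vanish only at $\xi=1$ in $(0,\sqrt 2)$. Equivalently, and perhaps more cleanly, one can mimic Lemma~\ref{isoscele}: study the single auxiliary function $\xi\mapsto \xi^3 f'(\xi)$ and use the substitution $s=\xi/\sqrt{4-\xi^2}$ (or equivalently $\xi=2\tanh t$, which turns $\ln\!\bigl(\tfrac{2+\xi}{2-\xi}\bigr)=2t$ and $\ln\!\bigl(\tfrac{\xi^2}{4-\xi^2}\bigr)=2\ln\sinh t$) to rewrite $f'$ as a rational combination of $t$, $\sinh t$, $\cosh t$ and $\ln\sinh t$; concavity/convexity of the resulting one-variable expression can then be read off with at most one sign change of the derivative.

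Once uniqueness of the critical point at $\xi=1$ is established, together with the boundary comparisons $f(0^+)=\tfrac12<f(1)$ and $f(\sqrt 2)<f(1)$ from the first step, it follows that $\xi=1$ is the global maximizer of $f$ on $[0,\sqrt 2]$, which is the claim.
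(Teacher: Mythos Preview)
Your plan is essentially the paper's own proof: check the boundary values, verify $f'(1)=0$, and then rule out any other interior critical point by studying a polynomial multiple of $f'$. The only difference is that the paper multiplies by $6\xi^2$ rather than $6\xi^2(4-\xi^2)$, which yields the clean form $(6\xi^2 f')'(\xi)=3\xi^2\bigl(\tfrac{4(\xi^2-6)}{(\xi^2-4)^2}-\ln\tfrac{\xi^2}{4-\xi^2}\bigr)$, a difference of a strictly decreasing and a strictly increasing function, so the single sign change of $(6\xi^2 f')'$ and hence the uniqueness of the interior zero of $f'$ follow immediately without the extra Rolle bookkeeping you anticipate.
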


\begin{proof}
We observe that $f(0)<f(\sqrt{2})<f(1)$, and hence $f$ reaches its maximum at an interior point.
Computing the first derivative we get 
\be\label{prima}
6\xi^2f'(\xi)=-\left(-\frac{4\xi(\xi^2-1)}{\xi^2-4} + \ln \left(\frac{2 + \xi}{2 - \xi}\right) + \xi^3 \ln\left(\frac{\xi^2}{4-\xi^2}\right)\right).
\ee
A direct computation shows that at the right hand side vanishes of the previous identity vanishes both at $\xi=0$ and at $\xi=1$. 
Computing $(6\xi^2 f')'$ we get
\be\label{seconda}
(6\xi^2 f')'(\xi)=3\xi^2\left(\frac{4(\xi^2-6)}{(\xi^2-4)^2}-\ln\left(\frac{\xi^2}{4-\xi^2}\right)\right)
\ee
The function $t\mapsto -\ln(t^{-2}(4-t^2))$ is strictly increasing, vanishes at $t=\sqrt{2}$ and decays to $-\infty$ when $t$ approaches $0$, while the function $t\mapsto -4(t^2-4)^{-1}(t^2-6)$
is strictly decreasing and equal to $-3/2$ when $t=0$. Hence there exists a unique $\xi_0\in (0,\sqrt{2})$ such that
the right hand side of \eqref{seconda} is positive for $\xi<\xi_0$ and negative otherwise. 
Hence the right hand side of \eqref{prima} can vanishes at most at a single interior point and the lemma is proved.
\end{proof}

\noindent{\bf Acknowledgments.} This paper was elaborated during a research period of LB at the  Universit\'e Savoie Mont Blanc. The Universit\'e Savoie Mont Blanc and its facilities are kindly acknowledged.
 LB is member of the Gruppo Nazionale per l'Analisi Matematica, la Probabilit\`a e le loro Applicazioni (GNAMPA) of the Istituto Nazionale di Alta Matematica (INdAM).


\bigskip

\bigskip

\end{document}